\newtheorem{theorem}{Theorem}[section]
\newtheorem{proposition}[theorem]{Proposition}
\newtheorem{lemma}[theorem]{Lemma}
\newtheorem{sub-lemma}[theorem]{Sub-Lemma}
\newtheorem{example}[theorem]{Example}
\def\A{\mathcal{A}}
\def\L{\mathcal{L}}
\def\C{\mathcal{C}}
\def\H{\mathcal{H}}
\def\I{\mathcal{I}}
\def\M{\mathcal{M}}
\def\N{\mathcal{N}}
\def\R{\mathcal{R}}
\def\V{\mathcal{V}}
\def\EE{\mathbb{E}}
\def\NN{\mathbb{N}}
\def\PP{\mathbb{P}}
\def\RR{\mathbb{R}}
\DeclareMathOperator{\diam}{diam}
\def\M{\mathcal{M}}
\let\eps=\varepsilon
\def\B{\mathcal{B}}
\def\RR{{\mathbb R}}
\def\1{{{\mathit 1} \!\!\>\!\! I} }
\begin{document}

\title[Applications of spatio-temporal rare events processes]{Application of the convergence of the spatio-temporal processes for visits to small sets}
\author{Fran\c{c}oise P\`ene \and Beno\^\i t Saussol}
\address{Univ Brest, Universit\'e de Brest, LMBA, Laboratoire de
Math\'ematiques de Bretagne Atlantique, CNRS UMR 6205, Brest, France}
\email{francoise.pene@univ-brest.fr}
\email{benoit.saussol@univ-brest.fr}
%\urladdr{}
\keywords{}
\subjclass[2000]{Primary: 37B20}
\begin{abstract}
The goal of this article is to point out the importance of spatio-temporal processes in different questions of quantitative recurrence.
We focus on applications to the study of the number of visits to a small set before the first visit to another set (question arising from a previous work by Kifer and Rapaport), the study of high records, the study of line processes, the study of the time spent by a flow in a small set. We illustrate these applications by results on billiards or geodesic flows. This paper contains in particular new result of convergence in distribution of the spatio temporal processes associated to visits by the Sinai billiard flow to a small neighbourhood of orbitrary points in the billiard domain.
\end{abstract}
\date{\today}
\maketitle
\bibliographystyle{plain}
\tableofcontents

\section{Introduction}

Let $(\Omega,\mathcal F,\mu,T)$ or $(\Omega,\mathcal F,\mu,Y=(Y_t)_{t\ge 0})$ be a probability preserving dynamical system in discrete or continuous times.
Let $(A_\varepsilon)_\varepsilon$ be a family of measurable subsets of $\Omega$
with $\mu(A_\varepsilon)\rightarrow 0+$ as $\varepsilon\rightarrow 0$.
Given a family of measurable normalization functions $H_\varepsilon:A_\varepsilon\rightarrow V$
where $V$ is a locally compact metric space endowed with its Borel $\sigma$-algebra $\mathcal V$, we study the family of spatio-temporal point processes $(\mathcal N_\varepsilon)_\varepsilon$ on $[0,+\infty)\times V$ given by
\begin{equation}\label{PointProcessT}
 \mathcal N_\varepsilon(x):=
\mathcal N(T,A_\varepsilon,h_\varepsilon,H_\varepsilon):=\!\!\!\!\!
\sum_{n\ge 1\ :\ T^n(x)\in A_\varepsilon} \delta_{(nh_\varepsilon,H_\varepsilon(T^n(x)))} \quad\mbox{for a map }T
\end{equation}
or
\begin{equation}\label{PointProcessflot}
 \mathcal N_\varepsilon(x):=\mathcal N(Y,A_\varepsilon,h_\varepsilon,H_\varepsilon)=\!\!\!\!\!\!
\sum_{t>0\ :\ Y_t\mbox{ enters } A_\varepsilon} \delta_{(th_\varepsilon,H_\varepsilon(Y_t(x)))} \quad\mbox{for a flow }Y\, .
\end{equation}
We are interested in results of convergence in distribution of $(\mathcal N_\eps)_{\eps>0}$ to a point process $\mathcal P$ as $\varepsilon\rightarrow 0$ with a particular focus on applications of results of such kind.
Various results of convergence of such processes to Poisson point processes have been proved in \cite{ps20,jpz} for billiard maps and flows.\\
Let us point out the fact that these spatio-temporal processes contain much information: they do not only contain information on the visit time but they also contain informations on the 
spatial position at these visit times. For these reasons, on  may extract further information from results of convergence of these processes. 
Among the applications that have already been studied, let us mention:
\begin{itemize}
\item Study of the visits in a small neighborhood of an hyperbolic periodic point
of a transformation (see \cite[Section 5]{ps20}, with application to Anosov maps).\\
Such visits occurs by clusters (once a point visits such a neighbourhood, it stays close to the periodic point during an unbounded time before living this area). The idea we used to study these clusters was to consider a process $\mathcal N_\varepsilon$ corresponding to the last (or first) position of the clusters.
\item Convergence of a normalized Birkhoff sum processes
\[
\left(\left(n^{-\frac 1\alpha}\sum_{k=0}^{\lfloor nt\rfloor-1}f\circ T^k\right)_{t\ge 0}\right)_{n\ge 1}
\]
to an $\alpha$-stable process.
In \cite{marta} Tyran-Kami\'{n}ska provided criteria ensuring such a result. One of the conditions is the convergence of 
\[
\mathcal N_{1/n}=\mathcal N(T,\{|f|>\gamma n^{\frac 1\alpha}\},1/n,n^{-\frac 1\alpha}f(\cdot))
\]
(for every $\gamma>0$) to some Poisson point process. The general results of
\cite{ps20} combined with the criteria of \cite{marta} have been used in \cite{jpz}
to prove convergence to a L\'evy process for the Birkhoff sum process of H\"older
observable of billiards in dispersing domains with cusps.
\end{itemize}
We won't detail again the above applications. Our goal here is to emphasize on further ones.

After recalling  in Section~\ref{sec:gene} below the general results of convergence of spatio-temporal point processes to Poisson point processes established in \cite{ps20}, we present
in the remaining sections four other important applications of such convergence results:
\begin{itemize}
\item The number of visits  to (or of the time spent in) a small set before the first visit to a second small set (motivated by Kifer and Rapaport~\cite{kr}), with application to the Sinai billiard flow  with finite horizon,
\item The evolution of the number of records larger than some threshold, with an application to billiards with corners and cusps of order larger than 2,
\item The Line process of random geodesics (motivated by Athreya, Lalley, Sapir and Wroten \cite{line}),
\item The time spent by a flow in a small set, with application to the Sinai billiard flow with finite horizon.
\end{itemize}
Appendix \ref{appendbill} contains a new theorem of convergence of point processes for the Sinai billiard flow and for neighborhoods of arbitrary positions in the billiard domain, which is used in the examples that illustrate the applications above. Finally we also present an application to the closest approach by the billiard flow.

\section{Convergence results for transformations and special flows}\label{sec:gene}
We set $E:=[0,+\infty)\times V$ and we endow it with its Borel
$\sigma$-algebra $\mathcal E=\mathcal B([0,+\infty))\otimes \mathcal V$.
We also consider the family of measures $(m_\eps)_\eps$ on $(V,\mathcal V)$ defined by
\begin{equation}
m_\eps:=\mu(H_\eps^{-1}(\cdot)|A_\eps) 
\end{equation}
and 
$\mathcal W$ a family stable by finite unions and intersections of relatively compact open 
subsets of $V$, that generates the $\sigma$-algebra $\mathcal V$.
Let $\lambda$ be the Lebesgue measure on $[0,\infty)$.

We will approximate the point process defined by \eqref{PointProcessT}
or \eqref{PointProcessflot} by a Poisson point process
on $ E$.
Given $\eta$ a $\sigma$-finite measure on $(E,\mathcal E)$, 
recall that a process $\mathcal N$ is a Poisson point process on $E$ of intensity $\eta$ if
\begin{enumerate}
\item $\mathcal N$ is a point process (i.e. $\mathcal N=\sum_i\delta_{x_i}$ with $x_i$ $E$-valued random variables),
\item For every pairwise disjoint Borel sets $B_1,...,B_n\subset E$, the random variables
$\mathcal N(B_1),...,\mathcal N(B_n)$ are independent Poisson
random variables with respective parameters $\eta(B_1),...,\eta(B_n)$. 
\end{enumerate}
Let $M_p(E)$ be the space of all point measures defined on $E$, endowed with the topology of vague convergence; it is metrizable as a complete separable metric space.
A family of point processes $(\mathcal N_\eps)_\eps$ converges in distribution to $\mathcal N$ if for any bounded continuous function $f\colon  M_p(E)\to \RR$ the following convergence holds true 
\begin{equation}\label{cvdf}
\EE(f(\mathcal N_\eps))\to \EE( f(\mathcal N)),\quad \mbox{as }\varepsilon\rightarrow 0.
\end{equation}

For a collection $\mathcal A$ of measurable subsets of $\Omega$,
we define the following quantity:
\begin{equation}\label{defDelta}
\Delta(\A)
 := \sup_{A\in \A,B\in\sigma(\cup_{n=1}^{\infty}T^{-n}\A)} \left|\mu(A\cap B)-\mu(A)\mu(B)\right|.
\end{equation}

We set $\lambda$ for the Lebesgue measure on $[0,\infty)$.
\begin{theorem}(Convergence result for transformations \cite[Theorem 2.1]{ps20})\label{THM}
We assume that 
\begin{enumerate}
\item for any finite subset $\mathcal W_0$ of $\mathcal W$
we have $\Delta(H_\eps^{-1}\mathcal W_0)=o(\mu(A_\varepsilon))$,
\item there exists a measure $m$ on $(V,\mathcal V)$
such that for every $F\in\mathcal W$, $m(\partial F)=0$ and 
   $\lim_{\varepsilon\to 0}\mu(H_\eps^{-1}(F)|A_\eps)$ converges
 to $m(F)$.
\end{enumerate}
Then the family of
point processes $(\mathcal N_\varepsilon)_\varepsilon$
converges strongly\footnote{i.e. with respect to any probability measure absolutely continuous w.r.t. $\mu$} in distribution, as $\eps\rightarrow 0$,
 to a Poisson point process $\mathcal P$ of intensity $\lambda\times m$. 

In particular, for every
relatively compact open $B\subset E$ such that $(\lambda\times m)(\partial B)=0$,
$(\mathcal N_\varepsilon(B))_\varepsilon$ converges in distribution, as $\eps\rightarrow 0$,
to a Poisson random variable with parameter $(\lambda\times m)(B)$. 
\end{theorem}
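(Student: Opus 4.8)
The plan is to verify the hypotheses of the standard criterion for convergence to a Poisson point process (Kallenberg's theorem for simple point processes). Since $\lambda$ has no atoms the intensity $\lambda\times m$ is non-atomic, so $\mathcal P$ is simple; it therefore suffices to exhibit a dissecting $\pi$-system $\mathcal G$ of relatively compact $\mathcal P$-continuity subsets of $E$ on which
\[
\mu\big(\mathcal N_\varepsilon(B)=0\big)\longrightarrow e^{-(\lambda\times m)(B)}
\qquad\text{and}\qquad
\EE_\mu\big(\mathcal N_\varepsilon(B)\big)\longrightarrow(\lambda\times m)(B),
\]
the latter also providing tightness in the vague topology and ruling out escape of mass. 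I would take $\mathcal G$ to be the finite unions of ``rectangles'' $[a,b)\times F$ with $F\in\mathcal W$; using that $\mathcal W$ is stable under finite unions and intersections, every member of $\mathcal G$ is rewritable as $B=\bigsqcup_{j=1}^{k}[a_j,b_j)\times F_j$ with pairwise disjoint time slabs and $F_j\in\mathcal W$, and $m(\partial F_j)=0$ gives $(\lambda\times m)(\partial B)=0$.

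Fix such a $B$. For $n\ge1$ put $G_\varepsilon^{(n)}:=H_\varepsilon^{-1}(F_j)$ when $nh_\varepsilon\in[a_j,b_j)$ for the unique such $j$ (if any) and $G_\varepsilon^{(n)}:=\emptyset$ otherwise, so that $G_\varepsilon^{(n)}\in\A:=H_\varepsilon^{-1}\{F_1,\dots,F_k\}$, $G_\varepsilon^{(n)}=\emptyset$ once $n\ge N_\varepsilon:=\lceil(\max_j b_j)/h_\varepsilon\rceil$, and (because $H_\varepsilon^{-1}(F_j)\subset A_\varepsilon$)
\[
\mathcal N_\varepsilon(B)=\sum_{n\ge1}\mathbf{1}_{G_\varepsilon^{(n)}}(T^nx),
\qquad
\{\mathcal N_\varepsilon(B)=0\}=\bigcap_{n=1}^{N_\varepsilon}T^{-n}\big((G_\varepsilon^{(n)})^c\big).
\]
The intensity follows at once from $T$-invariance of $\mu$: $\EE_\mu(\mathcal N_\varepsilon(B))=\sum_{n=1}^{N_\varepsilon}\mu(G_\varepsilon^{(n)})=\sum_{j}\#\{n:nh_\varepsilon\in[a_j,b_j)\}\,\mu(A_\varepsilon)\,m_\varepsilon(F_j)$, which tends to $\sum_j(b_j-a_j)\,m(F_j)=(\lambda\times m)(B)$ by hypothesis (2) (recall $h_\varepsilon=\mu(A_\varepsilon)$).

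For the void probability I would peel off one time coordinate at a time. Setting $C_n:=(G_\varepsilon^{(n)})^c$ and writing $\bigcap_{n=1}^{N}T^{-n}(C_n)=T^{-1}\big(C_1\cap\bigcap_{n=1}^{N-1}T^{-n}(C_{n+1})\big)$, the invariance of $\mu$ together with the decorrelation estimate \eqref{defDelta} — applicable because $G_\varepsilon^{(1)}\in\A$ while $\bigcap_{n=1}^{N-1}T^{-n}(C_{n+1})\in\sigma\big(\bigcup_{m\ge1}T^{-m}\A\big)$ — gives
\[
\mu\Big(\bigcap_{n=1}^{N}T^{-n}(C_n)\Big)=\big(1-\mu(G_\varepsilon^{(1)})\big)\,\mu\Big(\bigcap_{n=1}^{N-1}T^{-n}(C_{n+1})\Big)+O\big(\Delta(\A)\big),
\]
and iterating $N_\varepsilon$ times (all factors lie in $[0,1]$) yields $\mu(\mathcal N_\varepsilon(B)=0)=\prod_{n=1}^{N_\varepsilon}(1-\mu(G_\varepsilon^{(n)}))+O(N_\varepsilon\,\Delta(\A))$. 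Since $\A=H_\varepsilon^{-1}\mathcal W_0$ with $\mathcal W_0=\{F_1,\dots,F_k\}$ a fixed finite subfamily of $\mathcal W$, hypothesis (1) gives $\Delta(\A)=o(\mu(A_\varepsilon))$, whence $O(N_\varepsilon\,\Delta(\A))=O\big((\max_j b_j)\,\Delta(\A)/\mu(A_\varepsilon)\big)=o(1)$; and since $\max_n\mu(G_\varepsilon^{(n)})\le\mu(A_\varepsilon)\to0$ while $\sum_n\mu(G_\varepsilon^{(n)})\to(\lambda\times m)(B)$, the product converges to $\exp(-(\lambda\times m)(B))$. Plugging these two limits into Kallenberg's theorem gives $\mathcal N_\varepsilon\Rightarrow\mathcal P$ for $\mu$, and the ``in particular'' clause then follows because $\mathcal N\mapsto\mathcal N(B)$ is $\mathcal P$-a.s.\ continuous when $(\lambda\times m)(\partial B)=0$.

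Finally, to obtain the \emph{strong} convergence — with respect to every $\nu\ll\mu$ — I would invoke the Zweimüller-type transfer principle for mixing limit theorems: one checks immediately that $\mathcal N_\varepsilon\circ T-\mathcal N_\varepsilon\to0$ in $\mu$-probability in $M_p(E)$ (the discrepancies being only the deletion of the single point at time $1$, which has probability $\mu(A_\varepsilon)\to0$, and a uniform time-shift by $h_\varepsilon\to0$), so that, the system being ergodic, the convergence in distribution under $\mu$ automatically propagates to all absolutely continuous initial laws. The probabilistic core above is short precisely because hypothesis (1) is an unusually strong, fully uniform decorrelation bound; consequently the steps I expect to be most delicate are the soft ones — verifying that $\mathcal G$ really provides a dissecting class of continuity sets given that $\mathcal W$ is only assumed $\cup$- and $\cap$-stable, and making the absolutely continuous transfer precise — rather than the estimate itself.
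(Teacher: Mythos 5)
This statement is recalled in the paper from \cite[Theorem~2.1]{ps20} without proof, so there is no in-paper argument to compare against; what follows assesses your reconstruction on its own merits. Your route — Kallenberg's criterion for simple point processes applied to a ring of ``time-slab $\times$ $\mathcal W$-set'' rectangles, mean via $T$-invariance, void probability via one-step-at-a-time peeling using the uniform decorrelation bound $\Delta$, and Zweim\"uller's transfer principle \cite{Zweimuller:2007} to upgrade from $\mu$ to arbitrary $\nu\ll\mu$ — is indeed the standard one for results of this type and is, to my knowledge, how \cite{ps20} proceeds. The iteration is set up correctly: with $P_j:=\mu\bigl(\bigcap_{n\ge1}T^{-n}(C_{n+j-1})\bigr)$ one gets $P_j=(1-\mu(G^{(j)}_\varepsilon))P_{j+1}+O(\Delta(\mathcal A))$ with unit prefactors, hence a cumulative error $O(N_\varepsilon\Delta(\mathcal A))=o(1)$ by hypothesis~(i), and the product tends to $e^{-(\lambda\times m)(B)}$ since $\max_n\mu(G^{(n)}_\varepsilon)\le\mu(A_\varepsilon)\to0$ while $\sum_n\mu(G^{(n)}_\varepsilon)\to(\lambda\times m)(B)$; the application of $\Delta(H_\varepsilon^{-1}\mathcal W_0)$ with the finite family $\mathcal W_0=\{F_1,\dots,F_k\}$ is legitimate since $G^{(1)}_\varepsilon\in\mathcal A$ and the remaining intersection lies in $\sigma(\bigcup_{n\ge1}T^{-n}\mathcal A)$. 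Two small remarks. First, you tacitly use $h_\varepsilon\sim\mu(A_\varepsilon)$; the statement as reproduced here does not record this, but it is forced (otherwise the mean could not converge to $(\lambda\times m)(B)$ with $m$ a probability), and is part of the set-up in \cite{ps20}, so this is a bookkeeping remark rather than a gap. Second, you invoke ergodicity for the Zweim\"uller transfer; it is not needed — \cite[Theorem~1]{Zweimuller:2007} only requires $\mathcal N_\varepsilon\circ T-\mathcal N_\varepsilon\to0$ in $\mu$-probability (in the Polish space $M_p(E)$), which you verify correctly via the vanishing time-shift $h_\varepsilon$ and $\mu(A_\varepsilon)\to0$. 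The only genuinely soft point, which you flag yourself, is checking that the ring generated by $\{[a,b)\times F: F\in\mathcal W\}$ is a dissecting class of $\mathcal P$-continuity sets; this is where the standing assumptions that $V$ is locally compact and $\mathcal W$ is an $\cup$/$\cap$-stable family of relatively compact open sets generating $\mathcal V$, together with $m(\partial F)=0$, are used, and it is worth spelling out in a complete write-up.
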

\begin{theorem}(Convergence result for special flows  \cite[Theorem 2.3]{ps20})\label{THMflow}
Assume $(\Omega,\mu,Y=(Y_t)_t)$ can be represented as a special flow over
a probability preserving dynamical system $(M,\nu,F)$ with roof function $\tau:M\rightarrow(0,+\infty)$ with $M\subset \Omega$ and set $\Pi:\Omega\rightarrow M$ for the projection such that  $\Pi(Y_s(x))=x$ for all $x\in M$ and all $s\in[0,\tau(x))$.\\
Assume moreover that $Y$ enters at most once in $A_\varepsilon$ between two consecutive visits to $M$ and that there exists a family of measurable normalization functions $G_\varepsilon:M\rightarrow V$ such that the family of point processes
$(\mathcal N(F,\Pi(A_\varepsilon),h_\varepsilon,G_\varepsilon))_\varepsilon$
converges in distribution,  as $\eps\rightarrow 0$ and with respect to some probability measure $\tilde\nu\ll\nu$, to a Poisson point process of intensity $\lambda\times m$, where $m$ is some measure on $(V,\mathcal V)$, then the family of point processes $(\mathcal N(Y,A_\varepsilon,h_\varepsilon/\mathbb E_{\nu}[\tau],G_\varepsilon\circ\Pi))_\varepsilon$
converges in distribution, as $\eps\rightarrow 0$ (with respect to any probability measure absolutely continuous with respect to $\mu$), to a Poisson process $\mathcal P$ of intensity $\lambda\times m$.
\end{theorem}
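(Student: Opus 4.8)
The plan is to transfer the convergence of the point process on the base $M$ up to the flow on $\Omega$ by exploiting the special-flow structure, namely that visits of $Y$ to $A_\varepsilon$ are in bijection (by the "at most once between consecutive returns to $M$" hypothesis) with visits of $F$ to $\Pi(A_\varepsilon)$, and that the time a visit occurs under the flow is the Birkhoff sum of the roof function $\tau$ up to the corresponding return time of $F$. Concretely, if the $k$-th visit of the orbit of $x\in M$ to $\Pi(A_\varepsilon)$ happens at time $n_k(x)$ under $F$, then the corresponding visit of the flow to $A_\varepsilon$ happens at flow-time $S_{n_k}\tau(x)=\sum_{j=0}^{n_k-1}\tau(F^j x)$ (up to a bounded error coming from the position inside the fiber at the entry point, which is $O(\sup\tau)$ when $\tau$ is bounded, or more care is needed in general). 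So the spatial marks $G_\varepsilon\circ\Pi$ of the flow process agree exactly with the marks $G_\varepsilon$ of the base process, and only the time coordinates must be rescaled.

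First I would write $\mathcal N(Y,A_\varepsilon,h_\varepsilon/\EE_\nu[\tau],G_\varepsilon\circ\Pi)$ explicitly in terms of the base data: it equals $\sum_k \delta_{(\,(h_\varepsilon/\EE_\nu[\tau])\,S_{n_k}\tau(x),\,G_\varepsilon(F^{n_k}x))}$, where the sum is over $k\ge 1$ and $n_k=n_k(x)$ runs over the successive visit times of the $F$-orbit to $\Pi(A_\varepsilon)$. Compare this with $\mathcal N(F,\Pi(A_\varepsilon),h_\varepsilon,G_\varepsilon)=\sum_k\delta_{(h_\varepsilon n_k,\,G_\varepsilon(F^{n_k}x))}$, which converges by hypothesis to a Poisson point process $\mathcal P$ of intensity $\lambda\times m$. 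The key analytic step is then a law-of-large-numbers / ergodic argument: since visits to $\Pi(A_\varepsilon)$ are rare ($\nu(\Pi(A_\varepsilon))\to 0$) and $h_\varepsilon$ is the corresponding normalization, the visit times $n_k$ tend to infinity; along such times Birkhoff's ergodic theorem (or its uniform version over the relevant set, using $\tau\in L^1(\nu)$ and the absolute continuity of the reference measure) gives $S_{n_k}\tau(x)/n_k\to\EE_\nu[\tau]$, hence $(h_\varepsilon/\EE_\nu[\tau])S_{n_k}\tau(x)=(h_\varepsilon n_k)\cdot\bigl(S_{n_k}\tau(x)/(n_k\EE_\nu[\tau])\bigr)$ is a small multiplicative perturbation of $h_\varepsilon n_k$. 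One formalizes this as: the map sending the point configuration $\{(h_\varepsilon n_k, G_\varepsilon(F^{n_k}x))\}$ to $\{((h_\varepsilon/\EE_\nu[\tau])S_{n_k}\tau(x), G_\varepsilon(F^{n_k}x))\}$ is, with probability tending to $1$, a perturbation of the identity that is uniformly small on every compact time window $[0,L]$, so it does not affect vague convergence of the point processes.

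The cleanest way to package this is to fix a bounded continuous test function $f$ on $M_p(E)$ supported (in the vague-topology sense) on configurations restricted to $[0,L]\times V$ for some $L$, observe that $f$ is uniformly continuous, and then show $|\EE_{\tilde\nu}(f(\mathcal N(Y,\dots)))-\EE_{\tilde\nu}(f(\mathcal N(F,\dots)))|\to 0$ using the uniform time-perturbation estimate on the event where $S_{n_k}\tau/n_k$ is close to $\EE_\nu[\tau]$ for all relevant $k$, and a crude bound (finitely many points in $[0,L+1]\times V$ in the limit, Poisson-tail control of the number of points) on the complementary event. Then the triangle inequality with the hypothesis $\EE_{\tilde\nu}(f(\mathcal N(F,\dots)))\to\EE(f(\mathcal P))$ finishes it, first with respect to $\tilde\nu$ and then, by the strong-convergence mechanism already built into Theorem~\ref{THM} / \cite{ps20} (a change-of-measure / Knopp-type lemma argument), with respect to any probability absolutely continuous with respect to $\mu$.

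The main obstacle I anticipate is the uniform control of the Birkhoff sums $S_{n_k}\tau(x)$ along the (random, $\varepsilon$-dependent) visit times $n_k$: Birkhoff's theorem gives almost-sure convergence of $S_n\tau/n$ along the full sequence $n\to\infty$, and one must ensure this translates into uniformity over the visit times up to a large flow-time $L$, for a reference measure $\tilde\nu\ll\nu$ that need not be invariant. This is handled by noting $S_n\tau/n\to\EE_\nu[\tau]$ holds $\nu$-a.e., hence $\tilde\nu$-a.e., uniformly over $n\ge N(x)$ with $N(x)<\infty$ a.s.; since the smallest relevant visit time $n_1\to\infty$ in probability as $\varepsilon\to 0$, one gets $\min_k n_k\ge N(x)$ with probability tending to $1$, which is exactly what the argument needs. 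A secondary technical point, if $\tau$ is unbounded, is the $O(\tau(F^{n_k}x))$ discrepancy between the flow entry time and the Birkhoff sum at the fiber base; this is absorbed by the same ergodic estimate since $h_\varepsilon\tau(F^{n_k}x)$ is negligible compared to $h_\varepsilon n_k$ on the good event.
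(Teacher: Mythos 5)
Your plan is correct and is the canonical argument for transferring Poisson convergence from a base map to a special flow: identify the marks via $G_\eps\circ\Pi$, replace the discrete visit time $n_k$ by the Birkhoff sum $S_{n_k}\tau$, use rarity of visits ($n_1\to\infty$) together with the a.e.\ convergence $S_n\tau/n\to\EE_\nu[\tau]$ to show this is a vanishing perturbation of the time coordinate on compact windows, and then upgrade to strong convergence via the Zweim\"uller/mixing argument. The present paper does not reprove this statement but cites it as \cite[Theorem 2.3]{ps20}, and your outline matches the proof strategy of that reference, including the correct handling of the two technical points (the random $\eps$-dependent visit times and the unbounded-roof fiber error).
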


%\section{Applications}
\section{Number of visits to a small set before the first visit to a second small set}

Suppose $B_\eps^0$ and $B_\eps^1$ are two disjoint sets. We define the spatio-temporal process $\N_\eps$ with $A_\eps=B_\eps^0\cup B_\eps^1$, $H_\eps(x)=\ell$ if $x\in B_\eps^\ell$, $\ell=0,1$, that is on $[0,+\infty)\times \{0,1\}$
\begin{equation}\label{SPforhazard}
\N_\eps(x) = \sum_{n=1}^\infty \sum_{\ell=0}^1 \delta_{(n\mu(A_\eps),\ell)}1_{B_\eps^\ell}(T^n x)
\end{equation}
in the case of a transformation $T$ or 
\begin{equation}\label{SPforhazardflot}
\N_\eps(x) = \sum_{t>0} \sum_{\ell=0}^1 \delta_{(t h_\eps,\ell)}1_{Y_t\ enters\ B_\eps^\ell}
\end{equation}
in the case of a flow $Y$.
In \cite{kr} Kifer and Rapaport studied the distribution of a (multiple) event $T^nx\in B_\eps^1$ until a (multiple) hazard $T^n(x)\in B_\eps^0$.  We stick here to single event and hazard and define,
in the case of a transformation $T$,
\begin{equation}\label{hithazard}
\M_\eps(x) := \sum_{n=1}^{\tau_{B_\eps^0}(x)}1_{B_\eps^1}(T^nx)\, ,
\end{equation}
where we set $\tau_B(x):=\inf\{n\ge 1\, :\, T^n(x)\in B\}$ or, in the case of a flow $Y$:
\begin{equation}\label{hithazardflot}
\M_\eps(x) := \sum_{t_\in(0,\tau_{B_\eps^0}(x))}1_{Y_t\mbox{ enters }B_\eps^1}\, ,
\end{equation}
where we set $\tau_B(x):=\inf\{t>0\, :\, Y_t(x)\in B\}$.
The process $\mathcal M_\eps$ counts the number of entrances of the flow in the 1-set before 
its first visit to the 0-set.\\
In the case of a flow, it is also natural to consider the following process $\mathcal M'_\eps$
measuring the time spent by the flow in the 1-set before its first visit to the 0-set:
\begin{equation}\label{hithazardflot2}
\M'_\eps(x) :=\int_0^{\tau_{B_\eps^0}(x)}1_{B_\eps^1}\circ Y_s(x)\, ds\, .
\end{equation}
In view of the study of this last process, we will consider the following process measuring the time spent by the flow in each set:
\[
\left(\mathcal L_\eps:=\sum_{j=0}^1\sum_{t\, :\, Y_t\mbox{ enters }B_\eps^j}\delta_{th_\eps,j,a_\eps D_{B_\eps^j}\circ Y_t}\right)_{\eps>0}
\] 
with $D_A:=\tau_{\Omega\setminus A}$. 

\begin{theorem}\label{Thm2ensembles}
Let $p\in(0,1)$ and $\mathbb P$ be a probability measure on $\Omega$. 
Assume,
% moreover
in the case of a flow, that $\lim_{\eps\rightarrow 0}\mathbb P(
%Y_0\in 
B_\eps^0\cup B_\eps^1)$=0.\\

If the spatio-temporal process $\N_\eps$ defined as in \eqref{SPforhazard} or \eqref{SPforhazardflot} converges, with respect to $\mathbb P$, to a PPP of intensity $\lambda\times\B(p)$ where $\B(p)$ denotes the Bernoulli measure with parameter $p$ (for a transformation we expect $p=\lim_{\eps\to0} \mu(B_\eps^1)/\mu(A_\eps)$), then the process $(\M_\eps)_\eps$ has asymptotically geometric distribution, more precisely it converges in distribution to $\M$ 
with $\mathbb P(\M=k)=p^{k}(1-p)$ for any $k\ge 0$; in particular the asymptotic value for the \emph{commitor function} is
	\[
	\lim_{\eps\to0} \mathbb P(\tau_{B_\eps^0}<\tau_{B_\eps^1}) = \lim_{\eps\to0} \mathbb P(\M_\eps=0)=1-p.
	\]
In the case of a flow, if $(a_\eps\tau_{\Omega\setminus B^{1}_\eps})_\eps$
converges in probability $\mathbb P$ to 0 and if $(\mathcal L_\eps)_{\eps>0}$ supported on $[0,+\infty)\times\{0,1\}\times\bar\RR_+$ converges in distribution with respect to $\mathbb P$ to
a PPP $\mathcal L_0$ with intensity $\lambda\times \sum_{j=0}^1p_j(\delta_j\times m'_j)$
where the $m'_j$ are probability measures,
then $(a_\eps\M'_\eps)_\eps$
%time $\int_0^{\tau_{B_\eps^0}(x)}1_{B_\eps^1}\circ Y_s\, ds$ spent by the flow in $B_\eps^1$ before its first visit to $B_\eps^0$ 
converges to $\sum_{i=1}^{\M}X_i$ where $(X_i)_i$ is a sequence of i.i.d. random variables with distribution $m'_1$ and independent of $\M$ where $\M$ is as above.
\end{theorem}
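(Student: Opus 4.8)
The plan is to realise $\M_\eps$ and $a_\eps\M'_\eps$ as (essentially) deterministic functionals of the converging processes $\N_\eps$ and $\L_\eps$ and then to read off the limit laws from the structure of the limiting Poisson point processes. For a point configuration $\omega$ on $[0,+\infty)\times\{0,1\}$ (or on $[0,+\infty)\times\{0,1\}\times\bar\RR_+$) write $S^0(\omega)\in[0,+\infty]$ for the time coordinate of its first atom of label $0$ and $R^1_m(\omega)\in[0,+\infty]$ for the time coordinate of its $m$-th atom of label $1$, with the convention $+\infty$ when no such atom exists. Since the successive entrances into $A_\eps=B_\eps^0\cup B_\eps^1$ occur at distinct times, each carrying a single label, a direct inspection of \eqref{SPforhazard}, \eqref{SPforhazardflot}, \eqref{hithazard} and \eqref{hithazardflot} shows that, as events,
\[
\{\M_\eps\ge m\}=\{R^1_m(\N_\eps)<S^0(\N_\eps)\}\qquad(m\ge 1),
\]
this equality holding, in the flow case, off an event of $\mathbb P$-probability tending to $0$ --- the only use of the assumption $\mathbb P(B_\eps^0\cup B_\eps^1)\to 0$: with probability tending to $1$ the point $Y_0$ lies outside $A_\eps$, so the entrances recorded by $\N_\eps$ coincide with the visits counted by $\M_\eps$ and by $\tau_{B_\eps^0}$. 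Now $\omega\mapsto(R^1_m(\omega),S^0(\omega))$ is continuous at every $\omega$ whose atoms have pairwise distinct, non-accumulating time coordinates with at least $m$ atoms of label $1$ and at least one of label $0$; the limiting PPP $\mathcal P$ of intensity $\lambda\times\B(p)$ is concentrated on such configurations and, its atoms having a.s.\ distinct time coordinates ($\lambda$ being atomless), satisfies $\mathcal P(R^1_m=S^0)=0$, so $\{R^1_m<S^0\}$ is a $\mathcal P$-continuity set. The Portmanteau theorem on $M_p([0,+\infty)\times\{0,1\})$ together with the assumed convergence $\N_\eps\Rightarrow\mathcal P$ then gives $\mathbb P(\M_\eps\ge m)\to\mathcal P(R^1_m<S^0)$. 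Splitting $\mathcal P$ into its independent restrictions to the labels $0$ and $1$ --- PPPs on $[0,+\infty)$ of rates $1-p$ and $p$ --- their superposition is a rate-$1$ PPP whose atoms are independently assigned label $1$ with probability $p$, so $\{R^1_m<S^0\}$ is exactly the event that the first $m$ atoms all have label $1$, of probability $p^m$. Hence $\mathbb P(\M=m)=p^m-p^{m+1}=p^m(1-p)$, and the commitor identity is the case $m=1$ read on the complement: $\mathbb P(\M_\eps=0)=\mathbb P(\tau_{B_\eps^0}<\tau_{B_\eps^1})\to 1-p$.

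For the time $\M'_\eps$ spent by the flow in $B_\eps^1$ before its first visit to $B_\eps^0$, I would run the same scheme with $\L_\eps$ instead of $\N_\eps$, now keeping the duration marks. Partitioning $[0,\tau_{B_\eps^0}(x))$ into the successive excursions of the flow inside $B_\eps^1$, and using that $B_\eps^0$ and $B_\eps^1$ are disjoint so that an excursion begun before $\tau_{B_\eps^0}$ also ends before it, one obtains
\[
a_\eps\M'_\eps=a_\eps\int_0^{\tau_{B_\eps^0}}1_{B_\eps^1}\circ Y_s\,ds=\Psi(\L_\eps)+r_\eps,
\]
where $\Psi(\omega)$ is the sum of the $\bar\RR_+$-valued marks of the label-$1$ atoms of $\omega$ with time coordinate $<S^0(\omega)$, and the random remainder $r_\eps$ accounts only for a possible $B_\eps^1$-excursion already in progress at time $0$, whence $0\le r_\eps\le a_\eps\tau_{\Omega\setminus B_\eps^1}$, which tends to $0$ in $\mathbb P$-probability by hypothesis. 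It remains to prove $\Psi(\L_\eps)\Rightarrow\Psi(\L_0)$ in $\bar\RR_+$ and to identify the limit.

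Since $\N_\eps$ is the $\bigl([0,+\infty)\times\{0,1\}\bigr)$-marginal of $\L_\eps$, the first-part hypothesis forces $p_1=p$ and $p_0=1-p$, and by the marking theorem the label-$1$ atoms of $\L_0$ form a PPP of rate $p$ on $[0,+\infty)$ carrying i.i.d.\ $m'_1$-distributed marks $X_1,X_2,\dots$, independent of the label-$0$ PPP. Consequently $\Psi(\L_0)=\sum_{i=1}^{\M}X_i$, where $\M=\#\{i:t^1_i<S^0\}$ is the very geometric variable of the first part; crucially $\M$ is a function of the time components of the two PPPs alone, hence independent of the marks $(X_i)_i$ --- which is precisely the announced limit. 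For the convergence $\Psi(\L_\eps)\Rightarrow\Psi(\L_0)$, as $\Psi$ is not a bounded continuous functional for the vague topology, I would pass through the truncated functionals $\Psi_{T,R}(\omega)$ obtained by capping time coordinates at $T$ and replacing each mark by its minimum with $R$; each $\Psi_{T,R}$ is continuous at $\L_0$-almost every configuration (again by the genericity of distinct, non-accumulating atoms and the absence of mass on the relevant boundaries), so $\Psi_{T,R}(\L_\eps)\Rightarrow\Psi_{T,R}(\L_0)$, and one then lets $T,R\to\infty$, controlling the discarded part uniformly in $\eps$ via $\mathcal P(S^0>T)=e^{-(1-p)T}\to0$ for the time-tail and via the convergence of $\L_\eps$ itself for the mark-tail. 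This last analytic step --- replacing the discontinuous functionals $\M_\eps,\M'_\eps$ by truncations that are continuity sets, resp.\ maps, for the limiting PPP, and then removing the truncations with tail bounds uniform in $\eps$ --- is the main obstacle; everything else reduces to the elementary superposition and marking calculus for Poisson processes.
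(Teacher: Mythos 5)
Your proof is correct and follows the same overall strategy as the paper: realise $\M_\eps$ (respectively $a_\eps\M'_\eps$) as a functional of the converging spatio-temporal process, pass to the limit by a continuous-mapping argument, and identify the limiting law by elementary Poisson calculus. The technical variations you introduce are worth noting. In the first part, the paper applies the continuous mapping theorem once to the single functional $\xi\mapsto\xi([0,\tau^0]\times\{1\})$ and identifies the geometric law by conditioning on $\tau^0\sim\mathrm{Exp}(1-p)$ and computing $\int_0^\infty e^{-pt}\frac{(pt)^k}{k!}(1-p)e^{-(1-p)t}\,dt=(1-p)p^k$, whereas you go event by event via Portmanteau on $\{\M_\eps\ge m\}$ and use the superposition/thinning picture to get $\mathcal P(R^1_m<S^0)=p^m$; these are equivalent and equally standard. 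In the second part the paper writes down the mapping $J(\xi)=\int_{[0,\tau^0]\times\{1\}\times[0,K_0]}z\,d\xi$ with an unexplained cutoff $K_0$, asserts its a.e.\ continuity, and invokes Slutsky to absorb the incomplete-excursion remainder $a_\eps\tau_{\Omega\setminus B_\eps^1}$; your explicit truncation $\Psi_{T,R}$ and passage $T,R\to\infty$, with the tail of $S^0$ controlled by the exponential law and the mark-tail by $\limsup_\eps\mathbb P(\L_\eps([0,T]\times\{1\}\times[R,\infty])\ge1)\le\mathbb P(\L_0([0,T]\times\{1\}\times[R,\infty])\ge1)\to0$ (valid whenever $m'_1(\{\infty\})=0$), is the careful way to justify that step for possibly unbounded marks, and is in fact a cleaner account of what the paper sketches.
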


\begin{proof}
We first observe that the mapping 
\[
J:\xi\in M_p([0,+\infty)\times\{0,1\})\mapsto \xi([0,\tau^{0}]\times\{1\})
\]
is continuous
 where $\tau^0=\sup\{t\ge0\colon \xi([0,t]\times\{0\})=0\}$ is continuous at a.e. realization $\xi$ of $\chi:=PPP(\lambda\times\B(p))$. Indeed, $\xi(\cdot\times\{0\})$ and $\xi(\cdot\times\{1\})$ are the realization of two homogeneous independent Poisson process hence $\tau^0$ is a.s. not an atom of $\xi(\cdot\times\{1\})$.
Observe that, in the case of a transformation, $\M_\eps=J(\N_\eps)$ and in the case
of a flow $\mathbb P(\M_\eps\ne J(\N_\eps))=\mathbb P(Y_0\in B_\eps^0\cup B_\eps^1)\rightarrow 0$.
Therefore, by the continuous mapping theorem, $\M_\eps$ converges in distribution to $G:=J(\chi)$.

We now compute the law of $G$. The first hazard $\tau^0$ has an exponential distribution with parameter $1-p$, while $\chi^1(\cdot):=\chi(\cdot\times\{1\})$ is a Poisson point process with intensity $p\lambda$, and the two are independent. Therefore, for any $k\in\NN$
\[
\begin{split}
\PP(G=k) &= 
%\EE(
\PP(\chi^1([0,\tau^0])=k
%|\tau^0)
)\\
& = \int_0^\infty e^{-pt} \frac{(pt)^k}{k!} (1-p)e^{-(1-p)t}\, dt = (1-p)p^k.
\end{split}
\]
This ends the proof of the first points of the Theorem. Let us now prove the last one.
We use the fact that the mapping $J:\xi\in M_p([0,+\infty)\times\{0,1\}\times\bar\RR_+)\mapsto  \int_{[0,\tau^0]\times\{1\}\times[0,K_0]} z \, d\xi(t,j,z)$
is continuous at a.e. realization $\xi$ of $\chi$ and conclude as above by the continuous mapping theorem and the Slutzky lemma since 
$a_\eps\M'_\eps=\mathbf 1_{\{Y_0\not\in B^0_\eps\}}\left(J(\L_\eps)+ a_\eps\tau_{\Omega\setminus B_\eps^1}\right)$.
\end{proof}
\begin{example}\label{exaSinai2boules}
Consider the billiard flow $(Y_t)_t$  associated to a Sinai billiard with finite horizon in a domain $Q\subset \mathbb T^2$ (see Appendix for details).
Let $\mathbb P$ be any probability measure on $\Omega:=Q\times S^1$ absolutely continuous with respect to Lebesgue.
We fix two distinct point positions $q_0,q_1\in Q$ and
two positive real numbers $r_0,r_1>0$. Set
$B_\eps^{i}:=B(q_i,r_i\eps)\times S^1$ and $d_i=2-\mathbf 1_{q_i\in\partial Q}$.\\ Then $(\M_\eps)_\eps$ converges in distribution with respect to $\mathbb P$ to  $\M$ 
with $\mathbb P(\M=k)=p^{k}(1-p)$ for any $k\ge 0$ and with $p=\frac{d_1r_1}{d_0r_0+d_1r_1}$.\\
Moreover $(\eps^{-1}\M'_\eps)_\eps$
converges in distribution with respect to $\mathbb P$ to $r_1\sum_{i=1}^{\M}Y_i$ where $(Y_i)_i$ is a sequence of i.i.d. random variables with density $y\mapsto \frac {y}{\sqrt{1-y^2}}\mathbf 1_{[0,1]}(y)$ independent of $\M$, with $\M$ as above.

\end{example}
\begin{proof}
Recall that the billiard flow $Y$ preserves the normalized Lebesgue measure $\mu$ on $Q\times S^1$. 
In view of applying Theorem~\ref{Thm2ensembles}, observe first that
% for any probability measure $\mathbb P$ absolutely on $\mathbb P$ absolutely continuous with respect to Lebesgue, 
$\lim_{\eps\rightarrow 0}\mathbb P(B_\eps^0\cup B_\eps^1)=0$ and $\mathbb E[\eps^{-1}\tau_{\Omega\setminus B_\eps^1}]\le 2r_1\mathbb P(B_\eps^1)$, thus $(\eps\tau_{\Omega\setminus B_\eps^1})_\eps$ converges in   probability $\mathbb P$ to $0$.\\

As a direct consequence of Theorem~\ref{thm:billplusieursboules}, 
the family of spatio-temporal processes $(\mathcal N_\eps)_{\eps>0}$ given by~\eqref{SPforhazardflot}, with $h_\eps=\frac{(d_0r_0+d_1r_1)\pi\eps}{Area(Q)}$, converges in distribution
to a PPP of intensity $\lambda\times\B(\frac{d_1r_1}{d_0r_0+d_1r_1})$ and so the
first conclusions of Theorem~\ref{Thm2ensembles} holds true with $p=\frac{d_1r_1}{d_0r_0+d_1r_1}$. This ends the proof of the convergence $(\M_\eps)_\eps$.\\
Due to Theorem~\ref{thmtempspassebillard}, $(\L_\eps)_\eps$ with $a_\eps=\eps$ and $h_\eps$ as previously converges in distribution to a PPP with intensity $\lambda\times \sum_{j=0}^1p_j(\delta_j\times m'_j)$ where $p_j:=\frac{d_jr_j}{d_0r_0+d_1r_1}$
and where $m'_j$ has density $y\mapsto \frac {y}{2r_j\sqrt{4r_j^2-y^2}}\mathbf 1_{[0,2r_j]}(y)$. Thus the last conclusion of Theorem~\ref{Thm2ensembles} holds also true with these notations. We conclude by taking $Y_i=X_i/(2r_1)$.
\end{proof}

\section{Number of high records}
We define the high records point process by 
\[
 \R_f(u,\ell)=\sum_{k=1}^\infty \delta_{k u} 1_{\{f\circ T^k>\max(\ell, f,...,f\circ T^{k-1})\}}\, .
\]
The successive times of records of an observable along an orbit are obviously tractable from the time and values of the observations along this orbit. The following proposition states that this is still the case for the corresponding asymptotic distributions. This has already been noticed in \cite{ht}, in particular in the context of Extremal events. Our result is similar to the proof of \cite[Theorem 3.1]{ht} from \cite[Theorem 5.1]{ht}.
\begin{proposition}\label{propRecords}
Let $(\Omega,\mathcal F,\mu,T)$ be a probability preserving dynamical system and $f:\Omega\rightarrow [0,+\infty)$ be a measurable function.
Assume the family 
 $\left(\mathcal N_{\varepsilon}=\mathcal N(T,\{f>\varepsilon^{-1}\},h_\varepsilon,
1/(\eps f)
%H_\varepsilon
)\right)_{\varepsilon>0}$
of point processes
on $[0,+\infty)\times[0,1]$
 converges in distribution with respect to $\mathcal P$ to a Poisson point process of  intensity $\lambda\times m$ with $m$ a probability measure on $[0,1]$ without any atom.
Then $\left(\R_f( h_\eps,\varepsilon^{-1})\right)_{\varepsilon>0}$
converges in distribution, as $\varepsilon\rightarrow 0$ to a Point process $\R=\sum_{\ell=1}^\infty Z_\ell \delta_{T_\ell}$ where $T_\ell=\sum_{i=1}^\ell X_i$, the $X_i$ are independent standard exponential random variable and the $Z_\ell$ are independent random variable of Bernoulli distribution with respective parameters $\ell^{-1}$, and the two sequences are independent.
\end{proposition}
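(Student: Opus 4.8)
The plan is to mimic the argument of Theorem~\ref{Thm2ensembles}: find a functional $\Psi$ on the space of point measures such that the record process $\R_f(h_\eps,\eps^{-1})$ equals $\Psi(\mathcal N_\eps)$ (at least with probability tending to $1$), show $\Psi$ is continuous at $\chi$-a.e.\ realization of the limiting Poisson point process $\chi:=PPP(\lambda\times m)$, invoke the continuous mapping theorem, and finally identify the law of $\Psi(\chi)$. Concretely, writing a generic realization of $\mathcal N_\eps$ as the set of points $(t_k,v_k)$ with $t_k=k h_\eps$ the (rescaled) times at which $f\circ T^k>\eps^{-1}$ and $v_k=1/(\eps f(T^kx))\in[0,1]$ the reciprocal normalized value, a record $f\circ T^k>\max(\eps^{-1},f,\dots,f\circ T^{k-1})$ occurs \emph{within the window where $f$ exceeds $\eps^{-1}$} precisely when $v_k<\min\{v_j: t_j<t_k\}$, i.e.\ when the current point is the running minimum of the spatial coordinate. (A record below the threshold $\eps^{-1}$ contributes nothing to $\R_f(h_\eps,\eps^{-1})$, since the indicator there is $1_{\{f\circ T^k>\max(\eps^{-1},\dots)\}}$ and $f\circ T^k\le\eps^{-1}$.) So define
\[
\Psi(\xi)=\sum_{(t,v)\in\xi} 1_{\{v<\min\{v': (t',v')\in\xi,\ t'<t\}\}}\,\delta_{t},
\]
with the convention that the minimum over the empty set is $+\infty$ (so the first point is always a record). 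Then $\R_f(h_\eps,\eps^{-1})=\Psi(\mathcal N_\eps)$ exactly.

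Next I would check continuity of $\Psi$ at $\chi$-a.e.\ realization, with respect to vague convergence on $M_p([0,+\infty)\times[0,1])$. The key point is that $\chi$ is a Poisson process whose spatial marks $v$ are i.i.d.\ with law $m$, and $m$ has no atom; hence a.s.\ all the marks are distinct and, on any bounded time interval, no two partial minima tie, and the running minimum is attained at finitely many points that are ``strict'' (the infimum is not approached but not attained). The only way $\Psi$ can fail to be continuous is if a small perturbation of the points flips a strict inequality $v<\min\{\dots\}$ into an equality, which is a $\chi$-null event. A mild technical point to handle is that $\Psi$ involves an infinite comparison $\min\{v': t'<t\}$; one restricts to a bounded window $[0,S]\times[0,1]$ and notes that the running minimum on $[0,S]$ is achieved (for large $S$) at a time well inside $[0,S]$, so truncation does not affect which points are records up to time $S$, and $S\to\infty$ exhausts $[0,+\infty)$. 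With this, the continuous mapping theorem gives $\R_f(h_\eps,\eps^{-1})\Rightarrow\Psi(\chi)$ in distribution.

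Finally I would identify the law of $\Psi(\chi)$. Project $\chi$ onto the time axis: $\chi(\cdot\times[0,1])$ is a homogeneous rate-$1$ Poisson process, so its points are $T_\ell=X_1+\dots+X_\ell$ with $X_i$ i.i.d.\ standard exponential, and these are exactly the candidate record times $\{T_\ell\}_{\ell\ge1}$. Conditionally on the times, by the marking theorem the marks $v_1,v_2,\dots$ (listed in time order) are i.i.d.\ with law $m$; since $m$ is atomless, the probability that the $\ell$-th mark is strictly smaller than all previous $\ell-1$ marks — i.e.\ that $T_\ell$ is a record — is the probability that the minimum of $\ell$ i.i.d.\ continuous variables falls in the last slot, namely $1/\ell$, and these events are independent across $\ell$ (they depend on the relative ranks of disjoint blocks via exchangeability — this is the classical record-indicator independence). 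Hence $Z_\ell:=1_{\{T_\ell\text{ is a record}\}}$ are independent Bernoulli$(\ell^{-1})$, independent of $(T_\ell)_\ell$, and $\Psi(\chi)=\sum_{\ell\ge1}Z_\ell\,\delta_{T_\ell}$, which is the claimed $\R$.

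The main obstacle I anticipate is the continuity verification: one must be careful that vague convergence of point measures, which allows points to enter/leave through the boundary of the window and to merge in the limit only on a null set, genuinely forces convergence of the record-indicator configuration. Writing $\Psi$ as the above countable sum of indicators, pinning down the (random, $\chi$-a.s.\ discrete and strict) set of ``bad'' configurations, and handling the passage from a truncated window to all of $[0,+\infty)$ is where the real work lies; everything else is either a direct computation (the law of $\Psi(\chi)$) or a verbatim reprise of the continuous-mapping scheme already used in the proof of Theorem~\ref{Thm2ensembles}.
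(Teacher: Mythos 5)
Your proposal is correct and follows essentially the same route as the paper: define the record-extraction functional on $M_p([0,\infty)\times[0,1])$ (your $\Psi$ is exactly the paper's $F$, picking out the running minima of the spatial mark), verify continuity at $\chi$-a.e.\ realization using atomlessness of $m$, apply the continuous mapping theorem, and then identify the limit law via the marking theorem and the classical independence of record indicators (the paper cites \cite[Proposition 4.3]{resnick} for this last step). Your discussion of why records of $f$ above the threshold correspond to running minima of $v=1/(\eps f)$ and of the truncation-to-bounded-windows issue in the continuity check is slightly more explicit than the paper's, but the argument is the same.
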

\begin{proof}
Define the mapping 
\[
F \colon \xi=\sum_i \delta_{(t_i,v_i)}\in M_p([0,\infty)\times[0,1])\mapsto \sum_{i\in I(\xi)}\delta_{t_i},
\]
where $I(\xi)$ are the records of $\xi$, defined by those $i$ such that for any $j$ one has $t_j<t_i \implies v_j>v_i$. 
The map $F$ is continuous at each $\xi$ such that the $t_i$'s, and the $v_i$'s, are distincts.
This is the case for a.e. realization $\xi$ of a Poisson process of intensity $\lambda\times m$.
Therefore by the continuous mapping theorem $\R_f(h_\eps,\eps^{-1})=F(\N_\eps)$ converges to $\chi=F(PPP(\lambda\times m))$. 

We are left to compute the distribution. Observe that $PPP(\lambda\times m)$ is distributed as $\sum_{\ell=1}^\infty \delta_{(T_\ell,W_\ell)}$ with $(T_\ell)$ as in the statement and the $W_\ell$ are i.i.d. with distribution $m$, the two sequences being independent. Let $Z_\ell=1_{\{W_\ell \text{ is a record}\}}$. By \cite[Proposition 4.3]{resnick} the $Z_\ell$ are independent, have probability $1/\ell$, and when $Z_\ell=1$ we keep the point $T_\ell$.
\end{proof}
In particular, for every $t>0$ the number of records exceeding the value $\eps^{-1}$ before the time $th_\eps^{-1}$ corresponds to $\R_f(h_\eps,\eps^{-1})([0,t])$ and the conclusion of Proposition~\ref{propRecords} implies that it converges to $\sum_{\ell=1}^{N_t}Z_\ell$
where $Z_\ell$ are as in Proposition~\ref{propRecords} and where $(N_s)_s$ is a standard Poisson Process independent of $(Z_\ell)_\ell$.
\begin{example}
Consider a dispersive billiard with corner and cusps of maximal order $\beta_*>2$ as in \cite{jpz}. Consider the induced system $(\Omega,\mu,T)$ corresponding to the successive reflection times outside a neighbourhood $\mathcal U$ of cusps and write $R(x)$ for the number of reflections in $\mathcal U$ starting from $x$. Set $\alpha=\frac{\beta_*}{\beta_*-1}\in(1,2)$.\\
%Let $a>0$. 
Setting $A_\eps:=\{R\circ T^{-1}>\eps^{-1}\}$, it has been proved in~\cite[Lemma 4.5]{jpz} that there exists an explicit $c_0>0$ such that $\mu(A_\eps)\sim c_0 
%a^{-\alpha}
\eps^\alpha$ as
$\eps\rightarrow 0$.\\ 
The  assumptions of Proposition~\ref{propRecords} hold true with
$f=R\circ T^{-1}$ and $h_\eps=\mu(A_\eps)\sim c_0
% a^{-\alpha}
\eps^\alpha$.
So the same assumptions hold true with $h_\eps= c_0 
%a^{-\alpha}
\eps^\alpha$.\\

Furthermore the number $R_n$ of records of $R$ higher than $n^{1/\alpha}$ before the $n$-th reflection outside cusps converges to  
$\sum_{\ell=1}^{N}Z_\ell$
where $Z_\ell$ are as in Proposition~\ref{propRecords} and where $N$ is a Poisson random variable of parameter $c_0$ and independent of $(Z_\ell)_\ell$.
\end{example}
\begin{proof}
It follows from the proof of~\cite[Lemma 4.8]{jpz} that\footnote{\cite[Lemma 4.8]{jpz} states that this convergence is true in the set of point processes on $[0,+\infty)\times[1,+\infty)$, but its proof can be adapted in a straighforward way to obtain our purpose by considering not only intervals of the form $(c,c')$
but also intervals of the form $(c,+\infty]$.}
% $N_n=\sum_{j\ge 1}\delta_{(j/n,R\circ T^{j-1}/n^{1/\alpha})}$  
the family of point processes
$(\mathcal N(T,A_\eps,\mu(A_\eps),\eps R\circ T^{-1}))_\eps$ on $[0,+\infty)\times[1,+\infty]$ converges in distribution to a PPP  with intensity 
%$\lambda\times m'$, $m'$ admiting a 
of density $(t,y)\mapsto \alpha
% a^{\alpha}
y^{-\alpha-1}\mathbf 1_{y>0}$ with respect to the Lebesgue measure.\\
Therefore the  assumptions of Proposition~\ref{propRecords} hold true with
$f=R\circ T^{-1}$ and $h_\eps=\mu(A_\eps)\sim c_0
% a^{-\alpha}
\eps^\alpha$.
So the same assumptions hold true with $h_\eps= c_0 
%a^{-\alpha}
\eps^\alpha$. This ends the proof of the first part.

For the second we apply Proposition~\ref{propRecords} with $\eps=n^{-\frac 1\alpha}$.
%, $n=c_0h_\eps$.
\end{proof}

\section{Line process of random geodesics}
We study the line process generated by a geodesic as in \cite{line} and recover their main result.
Let $N$ be a compact Riemannian surface of negative curvature. The geodesic flow $(Y_t)_t$ on the unit tangent bundle $\Omega=T^1N $ preserves the Liouville measure $\mu$.
Let $\pi_N\colon T^1N\to N$ be the canonical projection $(q,v)\mapsto q$.
We denote by $D(q,\eps)$ the ball in $N$ of radius $\eps$.
We now state the main theorem, postponing the details and precise definitions thereafter.
\begin{theorem}\label{thm:line}
	Fix $q_0\in N$. For any $a>0$, the intersection of the neighborhood $D(q_0,\eps)$ with the geodesic segment $\pi_N(\{ Y_t(x),0\le t\le a\eps^{-1} \})$, where $x$ is taken at random on $(\Omega,\mu)$, converges in distribution, after normalization,
	as $\eps\to0$, 
	to a Homogeneous Poisson line process in the unit disk of intensity $a/Area(N)$.
\end{theorem}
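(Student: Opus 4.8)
The plan is to realize the line process as a deterministic continuous functional of a spatio-temporal point process of visits, and then to invoke a convergence theorem for that point process together with the continuous mapping theorem. Concretely, each time the geodesic $\pi_N(Y_t(x))$ enters the disk $D(q_0,\eps)$, it produces a chord of that disk; after rescaling $D(q_0,\eps)$ to the unit disk $\DD$ (via the exponential chart at $q_0$, which is a near-isometry up to $O(\eps^2)$ distortion), this chord is determined to leading order by two parameters: the signed distance of the chord to the center and the angle of its direction, i.e.\ a point in the standard parameter space of lines meeting $\DD$. So the first step is to choose a normalization function $H_\eps$ on a neighborhood $A_\eps$ of the unit tangent bundle over $\partial D(q_0,\eps)$ (the "entering" configurations) which records exactly this pair of chord parameters, and to check that $H_\eps$ converges, after rescaling, to the natural coordinates on the space of lines, with $m_\eps=\mu(H_\eps^{-1}(\cdot)\mid A_\eps)$ converging to the measure $m$ on line-space that is invariant under the isometry group of $\DD$ (this is forced by the rotational symmetry of the Liouville measure restricted to a small geodesic sphere, in the $\eps\to0$ limit). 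One must also normalize time by $h_\eps\asymp \eps$ so that the window $[0,a\eps^{-1}]$ becomes $[0,a']$ for the appropriate constant; the factor $Area(N)$ enters here through $\mathbb E_\nu[\tau]$ in the special-flow representation (Theorem~\ref{THMflow}), exactly as in the billiard examples.

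The second step is to apply the general convergence result. The geodesic flow on a compact negatively curved surface is modelled as a special flow over a mixing hyperbolic base (this is standard; one can take a Markov or Young-tower base), so Theorem~\ref{THMflow} reduces the claim to verifying the mixing/decorrelation hypothesis (1) and the convergence-of-$m_\eps$ hypothesis (2) of Theorem~\ref{THM} for the induced base map, for the sets $\Pi(A_\eps)$ and the normalization $G_\eps$ inherited from $H_\eps$. Hypothesis (2) is the geometric computation sketched above. Hypothesis (1)—that $\Delta(H_\eps^{-1}\mathcal W_0)=o(\mu(A_\eps))$—follows from exponential decay of correlations for the hyperbolic base against the relevant (piecewise Hölder or BV) observables, together with the standard argument that the relevant sets, having diameter $O(\eps)$, can be approximated by such observables; this is the same mechanism used to establish Poisson convergence of visits for Anosov flows and billiards. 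One also needs the no-short-return condition ("$Y$ enters $A_\eps$ at most once between consecutive visits to $M$"), which holds because the base return time is bounded below while $A_\eps$ shrinks, and the no-self-intersection-of-short-geodesic-arcs statement (a short geodesic arc crosses $D(q_0,\eps)$ in a single chord) which is immediate for $\eps$ small by the curvature bound and injectivity radius.

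The third step is the continuous-mapping argument that upgrades point-process convergence to line-process convergence. Define the map $\Phi$ sending a point measure $\xi=\sum_i\delta_{(t_i,v_i)}$ on $[0,\infty)\times V$ (with $V$ the space of lines meeting $\DD$) to the collection of lines $\{\,\text{line with parameter }v_i : t_i\le a'\,\}$, viewed as a point measure on line-space, or equivalently to the union of those chords in $\DD$. This $\Phi$ is continuous at every $\xi$ that puts no mass on the slice $\{t=a'\}$ and has distinct $v_i$'s, which is a.s.\ the case for the limiting Poisson process $PPP(\lambda\times m)$ since $\lambda$ is nonatomic and $m$ is nonatomic. By the continuous mapping theorem, the rescaled intersection of $D(q_0,\eps)$ with $\pi_N(\{Y_t(x):0\le t\le a\eps^{-1}\})$—which is exactly $\Phi$ applied to our spatio-temporal process—converges in distribution to $\Phi(PPP(\lambda\times m))$. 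A Poisson point process on $[0,a']\times V$ with intensity $\lambda\times m$ pushed forward to line-space is a Poisson process on line-space with intensity $a'\,m$, i.e.\ precisely the homogeneous Poisson line process in $\DD$ of the stated intensity $a/Area(N)$, once the constants are tracked; I would carry this bookkeeping out at the end.

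The step I expect to be the main obstacle is the precise geometric identification in hypothesis (2): one must show that as $\eps\to0$ the conditional law of the chord parameters of an entering geodesic, under Liouville measure conditioned on being in the $\eps$-sphere collar, converges to the Euclidean-motion-invariant measure on lines—uniformly enough that the boundary-measure-zero conditions $m(\partial F)=0$ hold for the generating family $\mathcal W$. This requires controlling the $\eps$-dependence of the exponential chart (curvature-induced distortion of angles and lengths) and checking that it washes out in the limit; it is essentially the "local flatness" input, and is where the negative-curvature/compactness hypotheses are genuinely used beyond their role in guaranteeing mixing.
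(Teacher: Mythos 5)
Your proposal follows essentially the same route as the paper: parametrize entering geodesics by a point-and-angle pair on the rescaled boundary sphere, represent the geodesic flow as a special flow over a Markov section, apply Theorem~\ref{THMflow} to reduce to Theorem~\ref{THM} for the Poincar\'e map, identify the limit measure $m$, and conclude by the continuous mapping theorem. One remark on where you locate the difficulty: you flag the geometric identification in hypothesis~(2) (curvature distortion of the exponential chart) as the main obstacle, but in fact the paper disposes of this in one line, because $\cos\varphi\,dr\,d\varphi$ is transported exactly by the flow from the section to $\A_\eps$ and the rescaled exponential chart has Jacobian $\eps(1+O(\eps^2))$; the genuine technical work in the paper lies instead in verifying the short-return bound (condition (i) of \cite[Proposition 3.2]{ps20}), which does not follow merely from decay of correlations against H\"older observables but requires a quantitative non-recurrence estimate for geodesic segments re-entering $D(q_0,\eps)$, supplied by \cite[Lemma 5.3]{line}.
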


A Poisson line process in the unit disk $D$ of the plane, of intensity $\kappa\in(0,\infty)$, is a probabilistic process which draw lines in the disk. Each line $L$ is parametrized by $(r,\theta)\in[-1,1]\times[0,\pi]$ where
\[
L=\{(x,y)\in D\colon r=x\cos\theta+y\sin\theta\},
\]
and the parameters $(r,\theta)$ are produced by a Poisson point process of intensity $\frac\kappa\pi dr d\theta$ on $[-1,1]\times [0,\pi]$.
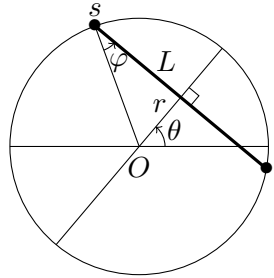
\begin{figure}[h!]
	\begin{tikzpicture}[scale=1.7]
	\coordinate (O) at (0,0);
	\coordinate (A) at (110:1);
	\coordinate (B) at (350:1);
	\coordinate (C) at (50:1);
	\coordinate (D) at (230:1);
	\draw[below] (O) node {$O$};
	\draw (A) node {$\bullet$};
	\draw (B) node {$\bullet$};
	\draw (O) circle (1);
	\draw (O) -- (A);
	\draw (C) -- (D);
	\draw[very thick] (A) -- (B);
	\draw (180:1) -- (0:1);
	\draw[->] (0.2,0) arc (0:50:0.2);
	\draw (25:0.3) node {$\theta$};
	\draw[above] (50:0.25) node {$r$};
	\draw[above] (A) node {$s$};
	\draw[->] (110:0.8) arc (290:320:0.2);
	\draw (110:0.7)+(0.08,0) node {$\varphi$};
	\draw (50:0.6) -- ++(320:0.1) -- ++(230:0.1);
	\draw[above] (A)+(322:0.7) node {$L$};
	\end{tikzpicture}
	\caption{Parametrization of the line $L$ by $(r,\theta)$ or $(s,\varphi)$.}\label{pardisk}
\end{figure}
Equivalently, changing the parametrization to $(s,\varphi)$ where $s\in\partial D=:S$ is one point of intersection of the line with the unit circle and $\varphi$ is the angle between the line $L$ (directed into the disk) and the normal at $s$ pointing inside the disk (see Figure~\ref{pardisk}), gives a Poisson point process of intensity $\frac{\kappa\cos\varphi}{2\pi}ds d\varphi$ (the jacobian is $\cos\varphi$ and each line has two representations in this parametrization). The intensity $\kappa$ in the theorem is equal to $a / Area(N)$, therefore the intensity in this parametrization will be $\frac{a}{2\pi Area(N)}\cos\varphi dsd\varphi = \frac{a}{Vol(T^1N)}\cos\varphi ds d\varphi$. The convergence of a point process in this parametrization implies it in the original one (by continuity of the change of parameter; see \cite[Proposition 3.18]{resnick}).

The exponential map $\exp_{q_0}$ is a local diffeomorphism on a neighborhood $U\subset T_{q_0}N$ of $0$. Thus its inverse is well defined on $D(q_0,\eps)$ for $\eps$ small enough so that $B(0,\eps)\subset U$. We identify $T_{q_0}N$ with $\RR^2$. Set $V=S\times [-\frac\pi2,\frac\pi2]$. 
For $q\in D(q_0,\eps)$ we let $s_\eps(q) = \eps^{-1}\exp_{q_0}^{-1}(q)$ and for $q\in \partial D(q_0,\eps)$ and $v\in T_qN$ we denote by $\phi_q(v)$ the angle between the normal at $q$ pointing inside the disk and $v$ (see Figure~\ref{intball}).
\begin{figure}[h!]
	\begin{tikzpicture}[scale=1.7]
	\coordinate (Oa) at (0,0);
	\coordinate (Aa) at (110:1);
	\coordinate (Ba) at (330:1);
	\draw[below] (Oa) node {$q_0$};
	\draw (Aa) node {$\bullet$};
	\draw (Ba) node {$\bullet$};
	\draw (Oa) node {$\bullet$};
	\draw (Oa) circle (1);
	\draw (Oa) -- (Aa);
	\draw[very thick] (Aa) to [bend left=10] (Ba);
	\draw[->,dashed,very thick] (Aa) to (50:0.5);
	\draw[above] (Aa) node {$q$};
	\draw[->] (110:0.8) arc (290:320:0.2);
	\draw[left] (110:0.75) node {$\phi_q( v)$};
	\draw[above] (Aa)+(322:0.5) node {$ v$};
	\draw[right] (0.5,0.1) node {$\gamma$};
	\end{tikzpicture}
	\caption{A geodesic arc $\gamma$ entering the ball $D(q_0,\eps)$.}\label{intball}
\end{figure}
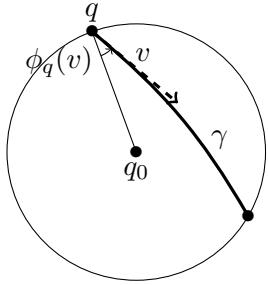

The intersection $\I_\eps^a(x):=\pi_N(Y_{[0,a\eps^{-1}]}(x))\cap D(q_0,\eps)$ consists of finitely many geodesic arcs $\gamma_i:=\pi_N(Y_{[t_i,t_i+\ell_i]}(x))$, where $\ell_i$ is the length of the arc; we drop the dependence on $x$ and $\eps$ for simplicity. 
 The arcs $\gamma_i$ are fully crossing the ball, except possibly for the two extremities (at $t=0$ or $t=a\eps^{-1}$) which could give an incomplete arc. The later happens with a vanishing probability as $\eps\to0$, therefore we will ignore this eventuality. The arc $\gamma_i$ enters the ball at the position $q_i$ with direction $v_i$ where $(q_i,v_i):=Y_{t_i}(x)$. 

When $\eps\to 0$, the geodesic arcs $\gamma_i$ which compose the intersection $\I_\eps^a$ become more and more straight. This justifies the definition of the convergence in distribution of $\I_\eps^a$ as the convergence in distribution of the point process 
\begin{equation}\label{ppsphi}
\sum_i \delta_{(s_\eps(q_i),\phi_q(v_i))}.
\end{equation}
Loosely speaking, we identify the images $s_\eps(\gamma_i)$ with the chord of the unit disk $D$ originated in $s_\eps(q_i)$ and direction $v_i$.

We now proceed with the proof of the theorem.
Let $\A_\eps \subset T^1N$ be the set of points $(q, v)$ such that $q\in\partial D(q_0,\eps)$ and $ v$ is pointing inside the ball.
We define on $\A_\eps$
\begin{equation}
\H_\eps(q,v)=(s_\eps(q),\phi_q(v)) \in V.
\end{equation}

The theorem is a byproduct of the following result for the geodesic flow.
\begin{proposition}\label{pro:boulepositionflot}
The process of entrances in the ball for the position for the geodesic flow  $\N(Y,\A_\eps,2\eps/Area(N),\H_\eps)$ on $[0,+\infty[\times V$
converges to a Poisson point process with intensity $\frac1{4\pi}\cos\varphi dt ds d\varphi$.
\end{proposition}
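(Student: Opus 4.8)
The plan is to realize the entrance process for the geodesic flow as a special-flow process and then apply Theorem~\ref{THMflow}, with the base dynamics being an appropriate section of the geodesic flow (for instance the time-1 map, or a Poincaré section transverse to the flow). First I would fix the reference measure: since the Liouville measure $\mu$ on $\Omega=T^1N$ is preserved and the flow has no periodic issues generically, I can take $\tilde\nu\ll\nu$ absolutely continuous with respect to the induced measure on the base. The set $\A_\eps$ is the inward-pointing half of the unit tangent bundle over $\partial D(q_0,\eps)$; its Liouville measure is, up to normalization, proportional to $\eps$ times a constant: indeed $\mathrm{length}(\partial D(q_0,\eps)) \sim 2\pi\eps$ and the fibre of inward directions contributes a factor $\int_{-\pi/2}^{\pi/2}\cos\varphi\, d\varphi = 2$, so the flux through $\partial D(q_0,\eps)$ scales like $\eps$ and the correct time normalization is $h_\eps = 2\eps/Area(N)$, matching the statement (here one uses $Vol(T^1N) = 2\pi\, Area(N)$ and the flux identity relating the measure of $\A_\eps$, the roof function, and the return rate).

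Next I would verify the two hypotheses of Theorem~\ref{THM} (at the level of the base map) for the normalization $\H_\eps(q,v) = (s_\eps(q),\phi_q(v))$. Hypothesis~(ii), the convergence of the conditional laws $m_\eps := \mu(\H_\eps^{-1}(\cdot)\mid \A_\eps)$ to a limit measure $m$, is essentially a change-of-variables computation: as $\eps\to0$ the sphere $\partial D(q_0,\eps)$, rescaled by $\eps^{-1}$ via $\exp_{q_0}^{-1}$, converges to the unit circle $S$, and the inward direction disintegrates with the weight $\cos\varphi$ coming from the flux form (Santaló-type identity). One gets $m = \tfrac{1}{4\pi}\cos\varphi\, ds\, d\varphi$ on $V = S\times[-\tfrac\pi2,\tfrac\pi2]$; I would also check $m(\partial F)=0$ for $F$ in the generating family $\mathcal W$ of relatively compact open sets, which holds because $\cos\varphi\,ds\,d\varphi$ is absolutely continuous. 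Hypothesis~(i), the mixing/decorrelation bound $\Delta(\H_\eps^{-1}\mathcal W_0) = o(\mu(\A_\eps))$, is where the real work lies: this requires a quantitative mixing estimate for the geodesic flow on a negatively curved surface, strong enough that the decay rate beats $\mu(\A_\eps)\asymp\eps$. I expect to invoke exponential decay of correlations for the geodesic flow (Dolgopyat/Liverani-type results, or the smooth-section version used in \cite{ps20}), combined with a careful treatment of the short-return contributions near $\A_\eps$ (the "clustering" terms), which are controlled because a geodesic that enters $D(q_0,\eps)$ crosses it in time $O(\eps)$ and cannot re-enter in a uniformly bounded time with non-negligible probability on a surface without conjugate points. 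This short-return control, plus the fact that $Y$ enters $\A_\eps$ at most once between consecutive returns to the base section (after possibly refining the section), is exactly what is needed to apply Theorem~\ref{THMflow}.

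The main obstacle, then, is establishing hypothesis~(i): transferring the known mixing properties of the geodesic flow into the decorrelation bound $\Delta(\H_\eps^{-1}\mathcal W_0)=o(\mu(\A_\eps))$ uniformly over finite families $\mathcal W_0$, and simultaneously ruling out anomalous short returns to $D(q_0,\eps)$ that would create clusters not captured by the Poisson limit. Once this is in place, Theorem~\ref{THM} gives convergence of the base process $\N(F,\Pi(\A_\eps),h_\eps,G_\eps)$ to a PPP of intensity $\lambda\times m$, and Theorem~\ref{THMflow} then promotes it to the flow process $\N(Y,\A_\eps,2\eps/Area(N),\H_\eps)$ with the same intensity $\lambda\times m = \tfrac{1}{4\pi}\cos\varphi\, dt\, ds\, d\varphi$, which is the claim. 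I would close by noting that the passage from this proposition to Theorem~\ref{thm:line} is the continuous-mapping argument already sketched in the text: each entrance point $(s_\eps(q_i),\phi_q(v_i))$ is identified with the chord it subtends, and the limiting intensity $\tfrac{1}{4\pi}\cos\varphi$, after accounting for the two-to-one chord parametrization and the time window $[0,a\eps^{-1}]$, is precisely the intensity $\tfrac{a}{2\pi\,Area(N)}\cos\varphi\, ds\, d\varphi$ of the homogeneous Poisson line process of intensity $a/Area(N)$.
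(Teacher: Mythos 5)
Your overall architecture matches the paper's: represent the geodesic flow as a special flow over a Poincar\'e section, verify the hypotheses of Theorem~\ref{THM} for the base map (the paper does this via \cite[Proposition~3.2]{ps20}, the ``smooth-section'' criterion you allude to), compute the limiting conditional measure as $\frac{1}{4\pi}\cos\varphi\,ds\,d\varphi$, and then lift to the flow by Theorem~\ref{THMflow} after evaluating $\int_M\tau\,d\nu=c_\nu\,Vol(T^1N)=c_\nu\,2\pi\,Area(N)$ to obtain $h'_\eps=2\eps/Area(N)$. So the skeleton is right.

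However, there is a real gap in the short-return step, which you yourself flag as the crux. Your proposed justification --- that a geodesic entering $D(q_0,\eps)$ ``cannot re-enter in a uniformly bounded time with non-negligible probability on a surface without conjugate points'' --- is not strong enough. The criterion of \cite[Proposition~3.2]{ps20}, with polynomial rate $\alpha=4$ (afforded by the exponential hyperbolicity of the section map), requires $\nu(\tau_{A_\eps}\le p_\eps\mid A_\eps)=o(1)$ for a time scale $p_\eps$ of order $\eps^{-1/2}$ iterates, not merely for bounded time. The paper supplies this by invoking the quantitative geometric estimate \cite[Lemma~5.3]{line}: for every $q\in D(q_0,\eps)$, the set of directions whose geodesic leaves $D(q_0,2\eps)$ and returns to $D(q_0,\eps)$ within time $\eps^{-1/2}$ has Lebesgue measure at most $K\sqrt\eps$, giving $\nu(A_\eps\cap\{\tau_{A_\eps}\le p_\eps\})=O(\eps^{3/2})=o(\nu(A_\eps))$. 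Without a lemma of this precise quantitative form, the short-return bound does not close. You also omit the boundary-regularity hypothesis $\nu((\partial A_\eps)^{[p_\eps^{-\alpha}]})=o(\nu(A_\eps))$, which the paper verifies by showing that $\partial A_\eps$ (in the induced topology on $M$) consists of finitely many $C^2$ curves of finite length, transverse to the flow, whose $\eps^2$-neighborhoods have measure $O(\eps^2)$. Finally, your suggestion of the ``time-1 map'' as the base dynamics would not work here: one needs a genuine Poincar\'e section with a uniformly hyperbolic return map, and the paper in fact takes a Markov section in the sense of Bowen so that the section map is conjugate to a subshift of finite type with a Gibbs measure, which is what makes \cite[Proposition~3.2]{ps20} applicable.
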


\begin{proof}[Proof of Theorem~\ref{thm:line}]
The counting process
\begin{equation}\label{box}
\L_\eps^a(\cdot):=\N(Y,\A_\eps,2\eps/Area(N),\H_\eps)([0,2a/Area(N)]\times \cdot)
\end{equation}
produces a point $(s,\varphi)$ each time that the geodesic flow $Y_t$ enters in $D(q_0,\eps)$ for some $t$ such that $2\eps t /Area(N) \le 2a/Area(N)$, that is $t\le a\eps^{-1}$. 
By Proposition~\ref{pro:boulepositionflot} and the continuous mapping theorem the point process $\L_\eps^a$ converges to a Poisson point process of intensity 
$\frac{2a}{Area(N)}\frac1{4\pi}\cos\varphi ds d\varphi$. 
By the above discussion, in particular~\eqref{ppsphi}, this completes the proof of the theorem.
\end{proof}

We emphasize that this proof only uses the convergence stated in Proposition~\ref{pro:boulepositionflot}, therefore it applies for more general 'geodesic-like flows', for instance the argument applies immediately to billiards systems, using Theorem~\ref{thm:billplusieursboules} in place of Proposition~\ref{pro:boulepositionflot}.

\begin{proof}[Proof of Proposition~\ref{pro:boulepositionflot}]
The first step is to construct a Markov section for the geodesic flow, subordinated to a finite family of disks $D_i\subset T^1N$. Fix some $\delta>0$ sufficiently small. 
By Bowen \cite{bowen} there exists a Markov section $(X_i)_i$ of size $\delta$, in particular $\diam X_i<\delta$ and $T^1N=\cup_i Y_{[-\delta,0]}(X_i)$. One can choose the disks $D_i\supset X_i$ in such a way that 
$$D_i\subset \{(q, v)\colon q\in Q_i, |\angle( n_q, v)|>\frac{\pi}{2}-\delta\}$$
where $Q_i$ are $C^2$ curve in $N$ and $n_q$ is the normal vector to $Q_i$ at $q$ (with $q\mapsto  n_q$ continuous). Without loss of generality we assume that $q_0\not\in \cup_i Q_i$.

The flow $(Y_t)$ is represented by a special flow over the Poincar\'e section $M:=\cup_i X_i$, with a $C^2$ roof function $\tau$. Let $\Pi$ be the projection onto $M$ along the flow in backward time. The flow $(T^1N,(Y_t),\mu)$ projects down to a system $(M,F,\nu)$, conjugated to a subshift of finite type with a Gibbs measure of a H\"older potential. In order to apply Theorem~\ref{THMflow} we need to check that the set $A_\eps:=\Pi\A_\eps$ and $H_\eps(x):=\H_\eps(Y_s(x))$ where $s>0$ is the minimal time such that $Y_s(x)\in \A_\eps$ fulfills the hypotheses of Theorem~\ref{THM}.
For that we will apply \cite[Proposition 3.2]{ps20}. The Poincar\'e map $F$ has a hyperbolic structure with an exponential rate, thus it satisfies the setting of \cite[Proposition 3.2]{ps20} with any polynomial rate $\alpha$, in particular $\alpha=4$ works. Here the boundary is meant in the induced topology on $M$. It suffices to prove that for some $p_\eps=o(\nu(A_\eps))$ one has 
(i) $ \nu(\tau_{A_\eps}\le p_\eps) = o(1)$ and (ii) $\nu((\partial A_\eps)^{[p_\eps^{-\alpha}]})=o(\nu(A_\eps))$, the two other assumptions being trivially satisfied in our situation. 

Measure of $A_\eps$:
The Liouville measure $\mu$ is the product of the normalized surface on $N$ times the Haar measure on $T^1N$. Its projection $\nu$ to the Poincar\'e section satisfies $d\nu = c_\nu \cos\varphi dr d\varphi$ for some normalizing constant $c_\nu=\left(\sum_i \int_{X_i}\cos\varphi drd\varphi\right)^{-1}$, where $r$ is the curvilinear abscissa on $Q_i$ and $\varphi$ the angle between the velocity and the normal to $Q_i$.
Moreover we have $d\mu = (\int_M\tau d\nu)^{-1}d\nu\times dt|_{M_\tau}$ where $M_\tau=\{(x,t)\colon x\in M,0\le t<\tau(x)\}$.

The geodesic flow preserves the measure $\cos\varphi drd\varphi$ from $A_\eps\subset M$ to $\A_\eps$, therefore 
\[
\begin{aligned}
\nu(A_\eps)&=c_\nu \int_{A_\eps}\cos\varphi drd\varphi = c_\nu \int_{\A_\eps} \cos\varphi dr d\varphi = c_\nu\int_{\partial D(q_0,\eps)} dr \int_{-\pi/2}^{\pi/2}\cos\varphi d\varphi \\
&\sim c_\nu 4\pi\eps.
\end{aligned}
\]

Short returns:
For any $q\in D(q_0,\eps)$,
let $R_\eps(q)$ be the set of $ v\in T^1_qN$ such that 
the geodesic segment $\gamma_{[0,\eps^{-1/2}]}(q, v)$ enters again $D(q_0,\eps)$ after leaving $D(q_0,2\eps)$. The result of \cite[Lemma 5.3]{line} ensures the existence of $K>0$ such that
for any $q\in D(q_0,\eps)$
\[
Leb (R_\eps(q))   \le K \eps^{1-1/2} = K\sqrt{\eps}.
\]
Therefore, setting $\hat\A_\eps = \{(q,v)\in\A_\eps\colon v\in R_\eps(q)\}$ we get that the bidimensional Lebesgue measure of $\hat\A_\eps$ is $O(\eps^{3/2})$. A fortiori since the projection $\Pi$ preserves the measure $\cos\varphi dr d\varphi$ we get 
\[
\nu(\Pi \hat \A_\eps)= c_\nu \int_{\Pi\hat\A_\eps} \cos\varphi dr d\varphi =
c_\nu \int_{\hat\A_\eps} \cos\varphi dr d\varphi = O(\eps^{3/2}).
\] 
Let $p_\eps=\lfloor(\max \tau)^{-1} \eps^{-1/2}\rfloor$ and notice that
 $A_\eps\cap \{\tau_{A_\eps}\le p_\eps\} \subset \Pi\hat \A_\eps$.
By the previous estimates we get
\[
\nu( A_\eps \cap \{\tau_{A_\eps}\le p_\eps\}) =O( \eps^{3/2}).
\]
Hence
\[
\nu(\tau_{A_\eps}\le p_\eps|A_\eps)=o(1).
\]
This is the assumption (i).

We now prove (ii).
The boundary of $A_\eps$ in the induced topology of $M$ is included in the set of $\Pi(q, v)$ where $ v$ is tangent to the boundary of $\partial D(q_0,\eps)$. This defines for each $i$ such that $X_i\cap A_\eps$ is nonempty at most two $C^2$ curves in $D_i$ of finite length (by transversality), therefore its $\eps^2$-neighborhood has a measure $O(\eps^2)$.

Finally, the measure $dm_\eps=(H_\eps)_*\nu(\cdot|A_\eps)$ is equal to the measure $dm:=\frac{1}{4\pi}\cos\varphi ds d\varphi$, since the measure $\cos\varphi dr d\varphi$ is preserved by the inverse of the projection $\Pi$ from $A_\eps$ to $\A_\eps$ and $\H_\eps$ has constant jacobian $\eps$ in these coordinates. By Theorem~\ref{THM} the point process $\N(F,A_\eps,\nu(A_\eps),H_\eps)$ converges to a Poisson point process of intensity $\lambda\times m$.
Applying Theorem~\ref{THMflow} with $h_\eps=c_\nu4\pi\eps$ and $h_\eps'=h_\eps/E_\nu(\tau)$ we get that $\N(Y,\A_\eps,h_\eps',\H_\eps)$ converges to a Poisson point process of intensity $\lambda\times m$.
In addition, 
\[
\int_M \tau d\nu = c_\nu \int_M \tau \cos\varphi dr d\varphi = c_\nu \int_{M_\tau} \cos\varphi dt dr d\varphi = c_\nu Vol(T^1N).
\]
Thus, since $Vol(T^1N) = 2\pi Area(N)$ we get that $h_\eps'=\frac{2\eps}{Area(N)}$, proving the proposition.
\end{proof}

\section{Time spent by a flow in a small set}
Given a flow $Y=(Y_t)_t$ defined on $\Omega$ and a set $A\subset\Omega$, a very natural question is to study the time spent by the flow in the set $A$, that is the local time $L_T(A)$ given by following quantity~:
\[
L_T(A):=\lambda\left(\left\{t\in[0,T]\, :\, Y_t\in A\right\}\right)\, .
\]
This quantity measures the time spent by the flow $Y$ in the set $A$ between time 0 and time $T$ (the symbol $L$ refers to the local time).
We also write $D_A:=\inf\{t>0\, :\, Y_t\not\in A\}$ for the duration of the present visit to the set $A$.

\begin{proposition}\label{tempspasseflot}
Let $J\ge 1$ and $Y=(Y_t)_{t\ge 0}$ be a 
flow defined on $(\Omega,\mathcal F,
\mathbb P
)$.
Assume that $\left( \mathcal N_\varepsilon=N(Y,A_\varepsilon,h_\varepsilon,H_\varepsilon)\right)_{\varepsilon>0}$
converges in distribution (with respect to $\mathbb P$) to a PPP $\mathcal N_0$ of intensity $\lambda\times m$ with $H_\eps(A_\eps)\subset V=\{1,...,J\}\times W$ where $m=\sum_{j=1}^J(p_j\delta_{j}\times m_j)$, with $\sum_{j=1}^Jp_j=1$ and where $m_j$ are probability measure on some separable metric space $W$. Suppose in addition that, for some $a_\eps$ and each $x$ entering in $A_\varepsilon$, $a_\varepsilon D_{A_\varepsilon}(x)= \mathfrak D_\varepsilon( H_\varepsilon(x))$ with $\lim_{\varepsilon\rightarrow 0} \mathfrak D_\varepsilon(j,w)=: \mathfrak D_j(w)$ uniformly in $w\in W$, where $\mathfrak D_j:W\rightarrow\bar\RR_+$ is continuous.

Then
\[
\left(\mathcal L_\eps=\sum_{t\, :\, Y_t(x)\mbox{ enters }A_\eps}\delta_{th_\eps,H_\eps^{(1)}(Y_t(x)),a_\eps D_{A_\eps}\circ Y_t(x)}\right)_{\eps>0}
\]
converges in distribution with respect to $\mathbb P$ to a PPP $\mathcal L_0$ on $[0,+\infty)\times\{1,...,J\}\times\bar\RR_+$ with intensity $\lambda\times\sum_{j=1}^J( 
p_j\delta_j\times (\mathfrak D_j)_*(m_j))$.

If moreover 
$a_\eps D_{A_\eps}\stackrel{\mathbb P}\rightarrow 0$,
 then, for every $T>0$,  $((a_\varepsilon L^{(j)}_{\lfloor t/h_\varepsilon \rfloor}(A_\eps))_{t\in[0,T],j=1,...,J})_{\varepsilon>0}$ converges in distribution to $\left(\sum_{k=1}^{N_t^{(j)}}X_k^{(j)}\right)_{t\in[0,T],j=1,...,J}$ as $\varepsilon\rightarrow 0$, where $(N_t^{(j)})_{t>0}$ are independent Poisson process with parameter $p_j$ and
where $(X_k^{(j)})_{k\ge 1}$ are independent sequences of independent identically distributed
random variables with distribution $\sum_{j=1}^Jp_j(\mathfrak D_j)_*(m_j)$ independent of $(N_t^{(j)})_{t>0}$
\end{proposition}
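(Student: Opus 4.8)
The plan is to transfer the convergence of $\mathcal N_\eps$ to the convergence of $\mathcal L_\eps$ via the continuous mapping theorem, exactly as in the proof of Theorem~\ref{Thm2ensembles}, and then to read off the second assertion by applying the mapping $\xi\mapsto\xi([0,t]\times\{j\}\times\bar\RR_+\,;\,\text{weight }z)$ and Slutsky's lemma. First I would handle the \emph{exact} point process $\tilde{\mathcal L}_\eps:=\sum_{t:\,Y_t\text{ enters }A_\eps}\delta_{(th_\eps,\,H_\eps^{(1)}(Y_t),\,\mathfrak D_\eps(H_\eps(Y_t)))}$, which by hypothesis coincides with $\mathcal L_\eps$ on the event that no entrance occurs at $t=0$ — an event of probability tending to $1$ under the standing assumption $\mathbb P(A_\eps)\to 0$ in the flow case (this needs to be added or is implicit; in any case the discrepancy is a single atom). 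The process $\tilde{\mathcal L}_\eps$ is the pushforward of $\mathcal N_\eps$ under the (measurable, not continuous) map $\Phi_\eps:\sum_i\delta_{(t_i,j_i,w_i)}\mapsto\sum_i\delta_{(t_i,j_i,\mathfrak D_\eps(j_i,w_i))}$. Since $\mathfrak D_\eps(j,\cdot)\to\mathfrak D_j(\cdot)$ \emph{uniformly} on $W$ and each $\mathfrak D_j$ is continuous, the maps $\Phi_\eps$ converge to the continuous map $\Phi(\sum_i\delta_{(t_i,j_i,w_i)})=\sum_i\delta_{(t_i,j_i,\mathfrak D_{j_i}(w_i))}$ uniformly on relatively compact sets; a generalized continuous mapping theorem (e.g.\ Billingsley-type, or directly testing against bounded continuous $f$ on $M_p(E)$ and using that vague convergence is determined by finitely many coordinates) then gives $\tilde{\mathcal L}_\eps\Rightarrow\Phi(\mathcal N_0)$. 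Finally $\Phi(\mathrm{PPP}(\lambda\times m))=\mathrm{PPP}(\lambda\times\Phi_*m)$ by the mapping theorem for Poisson processes, and $\Phi_*(\lambda\times\sum_j p_j\delta_j\times m_j)=\lambda\times\sum_j p_j\delta_j\times(\mathfrak D_j)_*m_j$ since $\Phi$ acts only on the last coordinate and fixes $(t,j)$. This proves the first assertion, with $\mathcal L_\eps\Rightarrow\mathcal L_0$ following by Slutsky since $\mathcal L_\eps$ and $\tilde{\mathcal L}_\eps$ agree with probability $\to1$.

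For the second assertion, observe that the local time decomposes as $a_\eps L^{(j)}_{\lfloor t/h_\eps\rfloor}(A_\eps)(x)=\big(\text{contribution of the ongoing visit at }t=0\text{ to set }j\big)+\sum_{t':\,Y_{t'}\text{ enters }A_\eps,\ t'h_\eps\le t,\ H_\eps^{(1)}(Y_{t'})=j} a_\eps D_{A_\eps}(Y_{t'})$, up to the (negligible) incomplete last visit. The leading term is a bounded continuous functional of $\mathcal L_\eps$, namely $G_j(t,\xi):=\int_{[0,t]\times\{j\}\times\bar\RR_+} z\,d\xi$ — more precisely $G_j(t,\cdot)$ is continuous at $\xi$ whenever $\xi$ has no atom on the slice $\{t\}\times\{j\}\times\bar\RR_+$, which holds a.s.\ for $\mathcal L_0$, and one should truncate $z\le K$ and let $K\to\infty$ to stay within the bounded-continuous framework (or argue directly with vague convergence plus tightness of the $z$-marginal, which here is automatic since $\mathfrak D_j$ has values in $\bar\RR_+$ and $m_j$ is a probability measure). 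The error term — the ongoing visit at $t=0$ and the truncated tail of visits with $D_{A_\eps}$ large — is controlled by the extra hypothesis $a_\eps D_{A_\eps}\stackrel{\mathbb P}\to 0$, which kills the ongoing-visit contribution, together with the fact that $\mathfrak D_j(w)<\infty$ for $m_j$-a.e.\ $w$ so that the truncation error vanishes as $K\to\infty$ uniformly in $\eps$. Then $G_j(t,\mathcal L_0)=\sum_{k=1}^{N_t^{(j)}} X_k^{(j)}$ where $N_t^{(j)}$ is the number of $\mathcal L_0$-points in $[0,t]\times\{j\}\times\bar\RR_+$ — a Poisson process of rate $p_j$, independent across $j$ since the ground measures $p_j\delta_j\times(\mathfrak D_j)_*m_j$ live on disjoint fibers — and $X_k^{(j)}$ are the i.i.d.\ $z$-marks with law $(\mathfrak D_j)_*m_j$, independent of the $N^{(j)}$; the joint convergence over $t\in[0,T]$ and $j=1,\dots,J$ follows from the continuous mapping theorem applied to the $\mathbb R^J$-valued càdlàg functional $\xi\mapsto(G_j(\cdot,\xi))_{j}$. (The statement's claim that the $X_k^{(j)}$ all have the \emph{same} law $\sum_j p_j(\mathfrak D_j)_*m_j$ looks like a typo for the law $(\mathfrak D_j)_*m_j$ depending on $j$; I would state it with the $j$-dependent law, which is what the construction yields.)

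The main obstacle is the \emph{lack of continuity} of the two functionals that do the real work: $\Phi_\eps$ depends on $\eps$ and is only measurable for fixed $\eps$, and $G_j$ involves integrating the unbounded coordinate $z$ against the point measure, which is not vaguely continuous. Both are overcome by the same two devices — (a) uniform convergence of $\mathfrak D_\eps$ to the continuous limit $\mathfrak D_j$, which promotes the ``$\eps$-dependent measurable map'' situation to an honest continuous-mapping argument (one can, e.g., factor $\Phi_\eps=\Psi\circ\Phi$ where $\Psi$ is a small perturbation controlled uniformly on compacts, then pass to the limit); and (b) the hypothesis $a_\eps D_{A_\eps}\stackrel{\mathbb P}\to0$ plus $K$-truncation, which makes the $z$-integral into an increasing limit of bounded continuous functionals with uniformly small tail. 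Everything else — verifying a.s.\ continuity points of $G_j$ for the limit PPP (no atoms on vertical slices, by the homogeneity of $\lambda$ in the time coordinate), the Poisson mapping theorem, the independence of the $N^{(j)}$ across disjoint fibers, and Slutsky's lemma to discard the $t=0$ and incomplete-visit terms — is routine and parallels the proof of Theorem~\ref{Thm2ensembles} verbatim.
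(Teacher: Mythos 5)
Your proposal is correct and follows essentially the same route as the paper: the first assertion is obtained by writing $\mathcal L_\eps$ as the pushforward of $\mathcal N_\eps$ under the map $(t,j,w)\mapsto(t,j,\mathfrak D_\eps(j,w))$ and passing to the limit map (the paper invokes \cite[Proposition 3.13]{resnick} at exactly this point, where you spell out the uniform-convergence argument); the second assertion is obtained by the same decomposition of the local time, namely
\[
a_\eps L^{(j)}_{t/h_\eps}
= a_\eps\min(t/h_\eps,D_{A_\eps})
+ \int_{[0,t]\times\{j\}\times\bar\RR_+} z\, d\mathcal L_\eps ,
\]
with the first term killed by the hypothesis $a_\eps D_{A_\eps}\stackrel{\mathbb P}\to 0$ and the integral handled by continuous mapping (plus the truncation/tightness discussion you give, which the paper leaves implicit). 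One tiny inaccuracy: your worry about $\mathcal L_\eps$ versus $\tilde{\mathcal L}_\eps$ is unnecessary, since the hypothesis $a_\eps D_{A_\eps}(x)=\mathfrak D_\eps(H_\eps(x))$ for every $x$ entering $A_\eps$ makes the two processes equal, not merely equal on a high-probability event; the $t=0$ issue arises only in the local-time decomposition, exactly as you and the paper treat it there. Your observation that the law of the marks $X^{(j)}_k$ should be $(\mathfrak D_j)_*(m_j)$ rather than $\sum_j p_j(\mathfrak D_j)_*(m_j)$ is also correct; this appears to be a typo in the statement, and the paper's own final display $\int_{[0,t]\times\{j\}\times\bar\RR_+} z\, d\mathcal L_0$ corresponds to the $j$-dependent law as you say.
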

\begin{proof}
Observe that, for every $\epsilon\ge 0$, $\mathcal L_\eps=(\psi_\eps)_*(\mathcal N_\eps)$
with $\psi_\eps:(t,j,w)\mapsto (t,j,\mathfrak D_\eps(j,w))$ if $\eps>0$
and with $\psi_0:(t,j,w)\mapsto (t,j,\mathfrak D_j(w))$.
Using \cite[Proposition 3.13]{resnick} we prove the first statement.

Assume now that 
$a_\eps D_{A_\eps}\stackrel{\mathbb P}\rightarrow 0$.
Then 
\[
\left(a_\varepsilon L^{(j)}_{t/h_\varepsilon}=
a_\eps\min(t/h_\eps, D_{A_\eps})+
\int_{[0,t]\times\{j\}\times\bar\RR_+}z\, d\mathcal L_\eps(s,i,z)\right)_{t\in[0,T],j=1,...,J}
\]
which converges to $\left(a_\varepsilon L^{(j)}_{t/h_\varepsilon}=\int_{[0,t]\times\{j\}\times\bar\RR_+}z\, d\mathcal L_0(s,i,z)\right)_{t\in[0,T],j=1,...,J}$.
\end{proof}

We apply the previous result to the dispersive billiard flow in a Sinai billiard with finite horizon.\\

\begin{theorem}[Time spent by the billiard flow in a shrinking ball for the position]\label{thmtempspassebillard}
Consider the billiard flow associated to a Sinai billiard with finite horizon in a domain $Q\subset \mathbb T^2$ (see Appendix for details). Recall that this flow preserves the normalized Lebesgue measure on $Q\times S^1$. Let $J$ be a positive integer.
Let $q_1,...,q_J\in Q$ 
be a $J$ pairwise distinct fixed position in the billiard domain and $r_1,...,r_J$
be $J$ positive real numbers. 
We set $d_j=2$ if $q_j\not\in\partial Q$
and $d_j=1$ if $q_j\in\partial Q$ and $d:=\sum_{j=1}^Jd_jr_j$ and also
\[
\left(\mathcal L_\eps=\sum_{j=1}^J\sum_{t\, :\, Y_t(x)\mbox{ enters }B(q_j,r_j\eps)\times S^1}\delta_{\frac{d\pi\eps t}{Area(Q)},j,\eps^{-1} D_{B(q_j,r_j\eps)\times S^1}\circ Y_t(x)}\right)_{\eps>0}
\]
and
\[
L^{(j)}_{t/\eps}:=\int_0^{\frac t\eps} \mathbf 1_{\{Y_s(\cdot)\in  B(q_j,r_j\eps)\times S^1\}} \, ds\, .
\]
% If $q_0\not\in\partial Q_0$, we set 
%$\vec n_{q_0}$ is the unit normal vector to $\partial Q$ at $q_0$, otherwise we set 
Then, $(\mathcal L_\eps)_{\eps>0}$ converges strongly in distribution to
a PPP $\mathcal L_0$ with intensity $\lambda\times\sum_{j=1}^J 
\frac {d_j r_j}{d}(\delta_j\times m'_j)$
%, where  if $q_0\in\partial Q$.\\
where $m'_j$ is the distribution of $r_jX$ with $X$ a random variable of density $y\mapsto \frac{y}4.\arccos'(\frac y2)\mathbf 1_{[0,2]}(y)=\frac {y}{2\sqrt{4-y^2}}\mathbf 1_{[0,2]}(y)$.\\
Moreover, for every $T>0$, $(( \eps^{-1}L ^{(j)}_{t/\eps})_{t\in[0,T],j=1,\ldots,J})_{\varepsilon>0}$ converges strongly in distribution to $\left(r_j\sum_{k=1}^{N^{(j)}_{t}}X^{(j)}_k\right)_{t\in[0,T],j=1,\ldots,J}$ as $\varepsilon\rightarrow 0$, where $(N^{(j)}_t)_{t>0}$ are independent Poisson process with parameter $\frac{d_j\, Area(Q)}{d^2\pi}$,
where $(X_k)_{k\ge 1}$ is a sequence of independent identically distributed
random variables with density $x\mapsto \frac {x}{2\sqrt{4-x^2}}\mathbf 1_{[0,2]}(x)$ independent of $(N_t)_{t>0}$.
\end{theorem}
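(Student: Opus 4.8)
The plan is to deduce both assertions from Proposition~\ref{tempspasseflot}, feeding into it the convergence of the entrance process supplied by Theorem~\ref{thm:billplusieursboules}. Concretely, take $A_\eps=\bigcup_{j=1}^J B(q_j,r_j\eps)\times S^1$, $h_\eps=\frac{d\pi\eps}{Area(Q)}$, $a_\eps=\eps^{-1}$, and let $H_\eps$ be the normalization function of Theorem~\ref{thm:billplusieursboules}: it records the index $j$ of the visited ball together with a rescaled entrance position $s$ on the arc $\partial B(q_j,r_j\eps)\cap Q$ and the incidence angle $\varphi$ to the inward normal. Theorem~\ref{thm:billplusieursboules} then gives that $\mathcal N_\eps=\mathcal N(Y,A_\eps,h_\eps,H_\eps)$ converges strongly in distribution to a PPP of intensity $\lambda\times m$ with $m=\sum_{j=1}^J \frac{d_j r_j}{d}(\delta_j\times m_j)$, where $m_j$ is the probability measure on $W:=S\times[-\pi/2,\pi/2]$ with density proportional to $\cos\varphi\,ds\,d\varphi$ (angular marginal $\tfrac12\cos\varphi$). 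This is precisely the hypothesis of Proposition~\ref{tempspasseflot} with $p_j=\frac{d_j r_j}{d}$.

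Next I would identify the duration function $\mathfrak D_\eps$. For $\eps$ small the portion of the billiard orbit inside $B(q_j,r_j\eps)$ is a straight unit-speed chord when $q_j\notin\partial Q$; when $q_j\in\partial Q$ it undergoes at most one reflection off a piece of $\partial Q$ which, being $C^2$, is flat up to $O(\eps^2)$ at this scale, and unfolding that reflection turns the sojourn into a straight chord of a disk of radius $r_j\eps$. In either case elementary plane geometry gives that a trajectory entering with incidence angle $\varphi$ spends time $2r_j\eps\cos\varphi$ in the ball, up to an $o(\eps)$ error. Hence $\eps^{-1}D_{A_\eps}(x)=\mathfrak D_\eps(H_\eps(x))$ with $\mathfrak D_\eps(j,s,\varphi)\to\mathfrak D_j(s,\varphi):=2r_j\cos\varphi$ uniformly in $(s,\varphi)$, and $\mathfrak D_j$ is continuous. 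A one–variable change of variables (if $\varphi$ has density $\tfrac12\cos\varphi$ on $[-\pi/2,\pi/2]$, then $2r_j\cos\varphi$ has density $\frac{y}{2r_j\sqrt{4r_j^2-y^2}}\mathbf 1_{[0,2r_j]}(y)$) shows that $(\mathfrak D_j)_*m_j=m'_j$, the law of $r_jX$ with $X$ as in the statement. Proposition~\ref{tempspasseflot} now yields the first display: $\mathcal L_\eps=(\psi_\eps)_*\mathcal N_\eps$ converges strongly in distribution to the PPP of intensity $\lambda\times\sum_j \frac{d_j r_j}{d}(\delta_j\times m'_j)$.

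For the second display I would first note that $D_{A_\eps}$ vanishes off $A_\eps$ and is $\le 2(\max_j r_j)\eps$ on $A_\eps$, so $\eps^{-1}D_{A_\eps}\le 2(\max_j r_j)\mathbf 1_{A_\eps}$, which tends to $0$ in probability since $\mathbb P(A_\eps)\to0$ for every $\mathbb P\ll\Leb$. The last conclusion of Proposition~\ref{tempspasseflot} then gives that $(\eps^{-1}L^{(j)}_{\cdot/h_\eps})_j$ converges (strongly in distribution, this being inherited from the continuous–mapping/Slutsky steps in its proof) to $(\sum_{k\le \bar N^{(j)}}r_jX^{(j)}_k)_j$ with independent Poisson processes $\bar N^{(j)}$ of rate $p_j=\frac{d_j r_j}{d}$ and $X^{(j)}_k$ of density $\frac{x}{2\sqrt{4-x^2}}\mathbf 1_{[0,2]}(x)$. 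It remains to switch from the clock $\cdot/h_\eps$ to the clock $\cdot/\eps$ of the statement: since $t/\eps=\big(\tfrac{d\pi t}{Area(Q)}\big)/h_\eps$, the process $t\mapsto\bar N^{(j)}_{d\pi t/Area(Q)}$ is again a Poisson process, of the rate claimed in the theorem, and $\eps^{-1}L^{(j)}_{t/\eps}\to r_j\sum_{k\le N^{(j)}_t}X^{(j)}_k$ follows.

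The main obstacle is the geometric step of the second paragraph near a boundary point $q_j\in\partial Q$: one must check, in the precise coordinates of Theorem~\ref{thm:billplusieursboules}, that the at-most-one reflection occurring inside the shrinking ball perturbs the chord formula $D_{A_\eps}=2r_j\eps\cos\varphi$ only by $o(\eps)$ (so that $a_\eps D_{A_\eps}=\mathfrak D_\eps\circ H_\eps$ with the required uniform convergence), and that the trajectories which re-enter the ball, or whose sojourn is clipped by the endpoints $t=0$ or $t=a\eps^{-1}$, form a set of $\mathbb P$-measure $o(\mathbb P(A_\eps))$. The bounded curvature of $\partial Q$ makes these $O(\eps)$ relative corrections, but they have to be reconciled with the exact form of $H_\eps$ coming from the appendix; everything else is routine bookkeeping of the scalings.
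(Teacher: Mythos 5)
Your proposal is correct and follows essentially the same route as the paper: invoke Theorem~\ref{thm:billplusieursboules} to get the entry point process, identify the (rescaled) sojourn duration as $2r_j\cos\varphi$ with an $\mathcal O(\eps)$ correction when $q_j\in\partial Q$, push the angular measure forward through $\varphi\mapsto 2r_j\cos\varphi$, and feed everything into Proposition~\ref{tempspasseflot}. Two small remarks: the paper works in the $(p,\vec u)\in S^1\times S^1$ coordinates of Theorem~\ref{thm:billplusieursboules} rather than your $(s,\varphi)$ parametrization, but this is only a change of variables and the density $\tfrac12\cos\varphi$ you use is exactly what the transfer formula in the paper produces; and in the final clock change you assert, without checking, that the Poisson rate $p_j\cdot\frac{d\pi}{Area(Q)}=\frac{d_j r_j\pi}{Area(Q)}$ obtained from the time rescaling equals the rate $\frac{d_j\,Area(Q)}{d^2\pi}$ stated in the theorem — it does not, and this appears to be a typo in the theorem statement that you should have flagged rather than glossed over, though it does not affect the substance of your argument.
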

\begin{proof}[Proof of Theorem~\ref{thmtempspassebillard}]
Due to Theorem~\ref{thm:billplusieursboules}, we know that
the family of processes
$$\sum_{j=1}^J\sum_{t\ :\ (Y_s(y))_s\mbox{ enters } B(q_j,\varepsilon)\times S^1\mbox{ at time t}}\delta_{\left(\frac{d\pi\varepsilon t}{Area(Q)},\frac{\Pi_Q(Y_t(y))-q_j}\eps,\Pi_V(Y_t(y))\right)}$$
converges in distribution (when $y$ is distributed with respect to any probability measure absolutely
continuous with respect to the Lebesgue measure on $\mathcal M$) as $\eps\rightarrow 0$ to 
a Poisson Point Process with intensity $\lambda\times \tilde m_0$
where $\tilde m_0$ is the probability measure on $\{1,...,J\}\times S^1\times S^1$ with density $(j,p,\vec u)\mapsto\sum_{j=1}^J\frac {r_jd_j}{d}\frac{1}{2d_j\pi} \langle(-p),\vec u\rangle^+
%$  if $q_j\not\in \partial Q$, resp. with density $(p,\vec u)\mapsto\frac1{2\pi} \langle(-p),\vec u\rangle^+
\mathbf 1_{\{\langle p,\vec n_{q_j}\rangle\ge 0\}}$ with $d:=\sum_{j=1}^Jd_jr_j$.\\
We will apply  Proposition~\ref{tempspasseflot} with $A_\eps:=\bigcup_{j=1}^JB(q_j,\eps r_j)\times S^1$ and $H_\eps(q,\vec v)=\left(j,\frac{\overrightarrow{q_jq}}{r_j\eps},\vec v\right)$ if $q\in \partial B(q_j,r_j\eps)$.\\
Let $x=(q,\vec v)$ entering in $B(q_j,\eps)\times S^1$.
If the billiard flow crosses $B(q_J,\eps)\times S^1$ before any collision off $\partial Q$, then
\[
\eps^{-1}D_{B(q_j,r_j\eps)\times S^1}(q,\vec v)=2\eps^{-1}\widehat{(\overrightarrow{qq_j},\vec v)}=D_0(H_\varepsilon (x))\, ,
\]
with $D_0(j,p,\vec u)=2r_j\widehat{(-p,\vec u)}$. This is always the case if $q_j\not\in \partial Q$. But, if $q_j\in\partial Q$, it can also happen that the billiard flow collides $\partial Q$
at a point $q'\in B(q_j,\eps)$ before exiting $B(q_j,\eps)\times S^1$. Then
the point $q'$ is at distance in $\mathcal O(\eps^2)$ of the tangent line to $\partial Q$ at $q_j$, and the tangent line of $\partial Q$ at $q'$ makes an angle in $\mathcal O(\eps)$ with the tangent line of $\partial Q$ at $q_j$. In this case 
\[
\eps^{-1}D_{B(q_j,r_j\eps)\times S^1}(q,\vec v)=2\eps^{-1}\widehat{(\overrightarrow{qq_j},\vec v)}+\mathcal O\left(\eps\right)=D_0(H_\varepsilon (x))+\mathcal O\left(\eps\right)\, ,
\]
uniformly in $x=(q,\vec v)$ and $\eps$.
In any case, we set $a_\varepsilon=\eps^{-1}$ and  $\mathfrak D_\varepsilon=D_0+\mathcal O(\eps)$.\\
%Set $d_0=2$ if $q_j\not\in \partial Q$ and $d_0= 1$ otherwise.\\
Applying now Proposition~\ref{tempspasseflot}, we infer that
$(\mathcal L_\eps)_{\eps>0}$ converges strongly in distribution to
a PPP $\mathcal L_0$ with intensity $\lambda\times\sum_{j=1}^J 
\frac {d_j r_j}{\sum_{j'=1}^Jd_{j'} r_{j'}}(\delta_j\times (D_j)_*(m_j))$, with
$D_j(p,\vec u)=2r_j\widehat{(-p,\vec u)}$ and
$m_j$ the probability measure on $S^1\times S^1$ with density
\[
(p,\vec u)\mapsto \frac{d_j}{2\pi} \langle(-p),\vec u\rangle^+
\mathbf 1_{\{\langle p,\vec n_{q_j}\rangle\ge 0\}}\, .
\]
It remains to identify the distribution $(D_j)_*(m_j)$.
By the transfer formula, we obtain
\begin{align*}
\int_0^\infty \!\!\!\!\! h(D_j(p,\vec u))\, dm_j(p,\vec u)&= \frac1{2d_j\pi}
\int_{S^1\times S^1}\!\!\!\!\! h(r_j\langle -2p,\vec u\rangle) \langle(-p),\vec u\rangle^+
\mathbf 1_{\{\langle p,\vec n_{q_j}\rangle\ge 0\}}\, dp\, d\vec u\\
&=\frac1{2}
\int_{-\frac\pi 2}^{\frac\pi 2} h(2r_j\cos\varphi) \cos\varphi\, d\varphi\\
&=\int_{0}^{\frac\pi 2} h(2r_j\cos\varphi) \cos\varphi\, d\varphi\\
&=\int_{0}^{2} h(r_jy) (\arccos(\cdot/2))'(y)\frac y2\, dy\, .
\end{align*}
Thus we have proved that the probability distribution $(D_j)_*\tilde m_j$ is the distribution of $r_jX$ with $X$ a random variable of density $y\mapsto \frac{y}4.\arccos'(\frac y2)\mathbf 1_{[0,2]}(y)=\frac {y}{2\sqrt{4-y^2}}\mathbf 1_{[0,2]}(y)$.\\
We can apply the last point of Proposition~\ref{tempspasseflot} 
%since $D_j$ is uniformly bounded by $2r_j$ and
since $\eps^{-1} D_{A_\eps}\le 2\max_j r_j \mathbf 1_{A_\eps}\stackrel{\mathbb P}\rightarrow 0$ for any probability measure $\mathbb P$ absolutely continuous with respect to the Lebesgue measure on $Q\times S^1$.

\end{proof}
\begin{appendix}
\section{Visits of the Sinai billiard flow to a finite union of balls for the position}\label{appendbill}
In this appendix we are interested in spatio temporal processes for the
Sinai billiard flow with finite horizon.\\
Let us start by recalling the model and introducing notations. 
We consider a finite family $\{O_i,\ i=1,...,I\}$ of convex open sets of the two-dimensional torus $\mathbb T^2=\mathbb R^2/\mathbb Z^2$. We consider the billiard domain $Q=\mathbb T^2\setminus\bigcup_{i=1}^IO_i$ and call the $O_i$ obstacles. We assume that
these obstacles have $C^3$-smooth boundary with non null curvature and that
their closures are pairwise disjoint. We consider a point particle moving in $Q$ in the following way: the point particle goes straight at unit speed in $Q$ and obeys to the classical Descartes reflexion law when it collides an obstacle. We then define the billiard flow $(Y_t)_{t\in\mathbb R}$ as follows. $Y_t(q,\vec v)=(q_t,v_t)$ is
the couple position-velocity of the point particle at time $t$ if the particle has position $q$ and velocity $\vec v$ at time 0.  To avoid any confusion, we consider that
the billiard flow is defined on the quotient $(Q\times S^1)/\mathcal R$, with $\mathcal R$ is the equivalence relation corresponding to the identification of pre-collisional and post-collisional vectors at a reflection time:
\[
(q,\vec v)\mathcal R(q',\vec v')\ \Leftrightarrow\ (q,\vec v)=(q',\vec v')\quad\mbox{or}\quad 
%[q=q'\in\partial Q\mbox{ and }
\vec v'=\vec v-2\langle \vec n_q,\vec v\rangle \vec n_q
%]
\, ,
\]
where $\vec n_q$ is the unit normal vector to $\partial Q$ at $q$ directed inward $Q$ if $q\in\partial Q$, with convention $\vec n_{q}=0$ if $q\not \in\partial Q$. This flow preserves the normalized Lebesgue measure $\mu$ on $Q\times S^1$.\\
We assume moreover that every billiard trajectory meets $\partial Q$ (finite horizon assumption).

Let us write $\Pi_Q:Q\times S^1\rightarrow Q$ and $\Pi_V:Q\times S^1\rightarrow S^1$ for the canonical projections given respectively by
$\Pi_Q(q,\vec v)=q$ and $\Pi_V(q,\vec v)=\vec v$.

\begin{theorem}[Visits of the billiard flow to a finite union of shrinking balls for the position]\label{thm:billplusieursboules}
Let $q_1,...,q_J\in Q$ 
be pairwise distinct positions in the billiard domain and $r_1,...,r_j$ be positive real numbers. We set $d_j=2$ if $q_j\not\in\partial Q$
and $d_j=1$ if $q_j\in\partial Q$ and $d=\sum_{j=1}^J d_jr_j$.

Then, the family of processes
$$\sum_{j=1}^J\sum_{t\ :\ (Y_s(y))_s\mbox{ enters } B(q_j,\varepsilon r_j)\times S^1\mbox{ at time t}}\delta_{\left(\frac{d\pi\varepsilon t}{Area(Q)},j,\frac{\Pi_Q(Y_t(y))-q_j}{r_j\eps},\Pi_V(Y_t(y))\right)}$$
converges in distribution (when $y$ is distributed with respect to any probability measure absolutely
continuous with respect to the Lebesgue measure on $\mathcal M$) as $\eps\rightarrow 0$ to 
a Poisson Point Process with intensity $\lambda\times \tilde m_0$
where $\tilde m_0$ is the probability measure on $V:=\{1,...,J\}\times S^1\times S^1$ with density $(j,p,\vec u)\mapsto\frac{r_j}{2d\pi} \langle(-p),\vec u\rangle^+
%$  if $q_0\not\in \partial Q$, resp. with density $(p,\vec u)\mapsto\frac1{2\pi} \langle(-p),\vec u\rangle^+
\mathbf 1_{\{\langle p,\vec n_{q_j}\rangle\ge 0\}}$.\\
%, where  if $q_0\in\partial Q$.\\
\end{theorem}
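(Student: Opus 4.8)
The plan is to realize the billiard flow as a special flow over the billiard map and then to invoke Theorem~\ref{THMflow}, reducing everything to a statement about the discrete-time billiard map, to which Theorem~\ref{THM} applies. First I would fix the billiard map $(M,\nu,F)$ — the collision map on the boundary with its usual invariant measure $d\nu = c\cos\varphi\, dr\, d\varphi$ — together with the roof function $\tau$ equal to the free flight time, so that $\mathbb E_\nu[\tau] = Vol(Q\times S^1)/(c^{-1}\cdot\#\{\text{pieces}\})$; the key normalization identity is $\mathbb E_\nu[\tau] = \frac{\pi\, Area(Q)}{|\partial Q|}$ (up to the normalizing constants), which is what turns the $h_\eps$ chosen in Theorem~\ref{THM} into the $\frac{d\pi\eps t}{Area(Q)}$ appearing in the statement. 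One checks that between two consecutive collisions the flow enters each ball $B(q_j,r_j\eps)\times S^1$ at most once for $\eps$ small (finite horizon, and the $q_j$ are fixed points not lying on a trajectory segment of more than bounded length before a collision), so the once-per-return hypothesis of Theorem~\ref{THMflow} holds after discarding the vanishing-measure set where a collision occurs inside a ball.

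Next I would set up the discrete-time problem: let $A_\eps = \Pi(\bigcup_j B(q_j,r_j\eps)\times S^1)$ be the trace on $M$ of the set of collision data whose forward flow orbit hits one of the balls before the next collision, and let $H_\eps$ (equivalently $G_\eps$) record, at the moment of entrance into the $j$-th ball, the pair $(j, p, \vec u)$ where $p = (\Pi_Q(Y_t(y))-q_j)/(r_j\eps) \in S^1$ is the normalized entrance position on the boundary sphere and $\vec u = \Pi_V(Y_t(y)) \in S^1$ is the velocity. The two hypotheses of Theorem~\ref{THM} must be verified. For (ii), the convergence of $\mu(H_\eps^{-1}(F)\mid A_\eps)$: a point of $M$ flows into $B(q_j,r_j\eps)\times S^1$ iff its trajectory line passes within $r_j\eps$ of $q_j$; parametrizing the line segment near $q_j$ by the entrance point on the sphere $\partial B(q_j,r_j\eps)$ and the inward direction, the Liouville measure disintegrates (as in the proof of Proposition~\ref{pro:boulepositionflot}, using that the geodesic/straight-line flow preserves $\cos\varphi\, dr\, d\varphi$) to give, in the limit, the density $(j,p,\vec u)\mapsto \frac{r_j}{2d\pi}\langle -p,\vec u\rangle^+ \mathbf 1_{\{\langle p,\vec n_{q_j}\rangle\ge 0\}}$; the weight $r_j/d$ is exactly the relative measure $\mu(B(q_j,r_j\eps)\times S^1 \text{ visited})/\mu(\bigcup \cdots)$, since that measure is proportional to $d_j r_j\eps$, and the factor $\langle -p,\vec u\rangle^+$ is the obvious cross-section Jacobian (flux through the sphere), with $d_j=1$ when $q_j\in\partial Q$ because only the half-sphere pointing into $Q$ is accessible. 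The indicator $\mathbf 1_{\{\langle p,\vec n_{q_j}\rangle\ge 0\}}$ and the normalization constant are matched by direct integration, exactly as in the $\mathfrak D_j$ computation in the proof of Theorem~\ref{thmtempspassebillard}.

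For hypothesis (i), the mixing/no-short-return estimate $\Delta(H_\eps^{-1}\mathcal W_0) = o(\mu(A_\eps)) = o(\eps)$, I would argue as in the proof of Proposition~\ref{pro:boulepositionflot}: apply \cite[Proposition 3.2]{ps20}, which reduces the decorrelation bound for the hyperbolic map $F$ (polynomial rate, say $\alpha=4$, suffices since billiard maps mix exponentially) to two conditions — that short returns to $A_\eps$ have negligible measure, $\nu(\tau_{A_\eps}\le p_\eps) = o(1)$ for a suitable $p_\eps = \lfloor \eps^{-1/2}\rfloor$ (a trajectory passing within $\eps$ of $q_j$ and returning within $\eps^{-1/2}$ collisions to within $\eps$ of some $q_i$ has probability $O(\eps^{3/2})$, by a Lebesgue estimate of the forbidden directions analogous to \cite[Lemma 5.3]{line}), and that the $p_\eps^{-\alpha}$-neighborhood of $\partial A_\eps$ in $M$ has measure $o(\nu(A_\eps))$ (the boundary of $A_\eps$ consists of finitely many $C^2$ curves coming from tangent trajectories to the spheres, so its $\eps^2$-neighborhood is $O(\eps^2) = o(\eps)$). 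Finally, $H_\eps$ has constant Jacobian proportional to $\eps$ in the $(r,\varphi)$ coordinates, so $m_\eps = (H_\eps)_*\nu(\cdot\mid A_\eps) \to \tilde m_0$ with $\tilde m_0(\partial F)=0$ for all $F\in\mathcal W$. Theorem~\ref{THM} then gives convergence of $\mathcal N(F,A_\eps,\nu(A_\eps),H_\eps)$ to $PPP(\lambda\times\tilde m_0)$, and Theorem~\ref{THMflow} transfers this to the flow with time-rescaling $h_\eps/\mathbb E_\nu[\tau] = \frac{d\pi\eps}{Area(Q)}$, which is the claimed process. The main obstacle is the short-return estimate of part (i) — controlling the probability that a near-encounter with one $q_j$ is quickly followed by a near-encounter with another $q_i$ — but this is essentially \cite[Lemma 5.3]{line} adapted to several centers and to the billiard flow, and the finite-horizon hypothesis keeps free flight times bounded so the adaptation is routine.
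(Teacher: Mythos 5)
Your overall architecture matches the paper's: reduce to $\mu$ via Zweim\"uller, represent the flow as a special flow over the collision map $(M,\nu,F)$, apply Theorem~\ref{THMflow} to reduce to the discrete-time process, and then verify the hypotheses of Theorem~\ref{THM} through \cite[Proposition 3.2]{ps20}; the computation of the limiting intensity and the normalization identity $h_\eps/\mathbb E_\nu[\tau]=d\pi\eps/Area(Q)$ are also correctly reproduced. But there is a genuine gap in your treatment of the short-return estimate, which is in fact the heart of this proof.

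You claim that $\nu(\tau_{A_\eps}\le p_\eps\mid A_\eps)=o(1)$ follows from ``a Lebesgue estimate of the forbidden directions analogous to \cite[Lemma 5.3]{line}'' and that ``the adaptation is routine''. This is not correct. The argument from \cite{line} applies to the geodesic flow on a compact negatively curved surface, a uniformly hyperbolic system with smooth Poincar\'e map; the corresponding Poincar\'e map for the billiard flow has a singularity set $\mathcal S_0=\{\varphi=\pm\pi/2\}$, near which derivatives blow up, and the set $B_\eps=\Pi(\widetilde A_\eps)$ has pieces accumulating arbitrarily close to $\mathcal S_0$ (especially when some $q_j\in\partial Q$). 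The soft count of ``forbidden directions'' gives you control only on \emph{very} quick returns (of order $|\log\eps|$ iterates), not on the full window up to $\eps^{-\sigma}$ iterates needed by \cite[Proposition 3.2]{ps20}. The paper devotes an entire lemma (Lemma~\ref{quickreturnbilliard}) with five steps to this: an explicit geometric description of $\partial B_\eps$ and $\partial F(B_\eps)$ as increasing/decreasing $C^1$ curves with controlled slopes away from the tangential set $R_\eps$; an exponential estimate for very quick returns via expansion of increasing curves; an estimate for intermediate returns via the growth lemma \cite[Theorem~5.52]{ChernovMarkarian} together with homogeneity strips and bounded distortion; and finally a long-range decorrelation estimate via \cite[Lemma 4.1]{ps15} after approximating $B_\eps$ by a union of components of $M\setminus(\mathcal S_{-k(\eps)}\cup\mathcal S_{k(\eps)})$. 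The authors explicitly warn that ``the extension to a finite number of points is relatively easy; the most difficult part is to treat all the possible positions in the billiard domain,'' and that is exactly what Lemma~\ref{quickreturnbilliard} accomplishes, in a way your proposal does not address.

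A smaller but non-negligible omission: you assert the once-per-return hypothesis of Theorem~\ref{THMflow} holds because ``the flow enters each ball at most once'' between collisions. But with several balls, a single free-flight segment can enter more than one ball $B(q_j,r_j\eps)$, and moreover the projected sets $\Pi(\widetilde A_\eps^{(j)})$ in $M$ can overlap for different $j$. The paper handles this in items (ii) and (iii) of its proof by introducing the modified set $A'_\eps$ (and $B'_\eps$) obtained by removing configurations lying in two projected entrance sets simultaneously, and shows the removed set has measure $o(\eps)$, using $\nu(B_\eps^{(j)}\cap B_\eps^{(j')})=O(\eps^2)$. Your proof sketch should include this reduction before the single-entrance hypothesis can be invoked.
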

Observe that if $q_j\in\partial Q$, the set of $p\in S^1$ satisfying $\langle p,\vec n_{q_j}\rangle\ge 0$ is a semicircle, whereas it is the full circle $S^1$ when $q_j$ is in the interior of $Q$.

This result has already been proved in \cite[Theorem 4.4]{ps20} for $J=1$ and Lebesgue-almost every position $q_1$.
The extension to a finite number of points is relatively easy. The most difficult part is to treat all the possibles positions in the billiard domain.

Along the paper we provided various applications of this theorem to different questions. We present here a result on the closest approaches to a given point in the billiard table by the orbit of the billiard flow.
\begin{example}
	Consider the billiard flow associated to a Sinai billiard with finite horizon in a domain $Q\subset \mathbb T^2$. 
	Consider a fixed position $q_0\in Q$. Set $d=2-\mathbf 1_{q_0\in\partial Q}$. 
	During each visit of the flow to $B(q_0,\eps)$, the closest distance to $q_0$ is given by $L_0(q,\vec v):=\eps|\sin\angle(\overrightarrow{qq_0},\vec v)|$ where $(q,\vec v)$ is the entry point.\\
	Then the family of closest approach point process
	\[
	\left(\mathcal \C_\eps:=\mathcal N(Y,B(q_0,\eps)\times S^1,d\eps/Area(Q),\eps^{-1}L_0\right)_{\eps>0}
	\]
	on $[0,+\infty)\times [0,1]$ converges in distribution (with respect to any probability measure absolutely continuous with respect to the Lebesgue measure on $Q\times S^1$) to a PPP with intensity 1.
\end{example}
\begin{proof}
	Due to Theorem~\ref{thm:billplusieursboules}, 
	the family of spatio-temporal processes 
	\[
	(\N_\eps:=\mathcal N(Y,B(q_0,\eps)\times S^1,d\eps/Area(Q),H_\eps)_{\eps>0}
	\]
	with
	$H_\eps(q,\vec v)=(\eps^{-1}\overrightarrow{q_0q},\vec v)$
	converges in distribution (with respect to any probability measure absolutely continuous with respect to Lebesgue on $Q\times S^1$)
	to a PPP of intensity $\lambda\times\tilde m_0$ where $\tilde m_0$ is the probability measure on $S^1\times S^1$ with density $(p,\vec u)\mapsto\frac{1}{2d\pi} \langle(-p),\vec u\rangle^+
	\mathbf 1_{\{\langle p,\vec n_{q_0}\rangle\ge 0\}}$ (where $\vec n_{q_0}$
	is the unit normal vector to $\partial Q$ at $q_0$ directed inward $Q$ if $q_0\in\partial Q$, $\vec n _{q_0}=0$ otherwise).\\
	Observe that
		\[
		\mathcal \C_\eps=\tilde G(\N_\eps),
		\]
		with $\tilde G(t,p,\vec u)=(t,G(p,\vec u))$ where $G(p,\vec u)=(t,|\sin\angle(-p,\vec u)|)$. Thus $(\mathcal \C_\eps)_\eps$ converges strongly in distribution to the PPP
	with intensity $\lambda\times G_*(\tilde m_0)$ and it remains to identify $\tilde m_1=G_*(\tilde m_0)$. Due to the transfer formula, we obtain
	\begin{align*}
		\int_0^\infty \!\!\!\!\! h(G(p,\vec u))\, d\tilde m_0(p,\vec u)&= \frac1{2d\pi}
		\int_{S^1\times S^1}\!\!\!\!\! h(|\sin\angle(-2p,\vec u)|) (\cos\angle(-p,\vec u))^+
		\mathbf 1_{\{\langle p,\vec n_{q_j}\rangle\ge 0\}}\, dp\, d\vec u\\
		&=\frac1{2}
		\int_{-\frac\pi 2}^{\frac\pi 2} h(|\sin\varphi|) \cos\varphi\, d\varphi\\
		&=\int_{0}^{\frac\pi 2} h(\sin\varphi) \cos\varphi\, d\varphi=\int_{0}^{1} h(y)\, dy\, .
	\end{align*}
\end{proof}

\begin{proof}[Proof of Theorem~\ref{thm:billplusieursboules}]
Due to \cite[Theorem 1]{Zweimuller:2007}, it is enough to prove the result for the convergence in distribution with respect to $\mu$.
Assume $\eps>\min_{j\ne j'}\frac{q_jq_{j'}}4$.
We use the representation of the billiard flow as a special flow over the
discrete time billiard system $(M,\nu,F)$ corresponding to collision times and with $\tau$ the length of the free flight before next collision.\\
Set $\widetilde A_\eps=\bigcup_{j=1}^J\widetilde A_\eps^{(j)}$, where
$\widetilde A_\eps^{(j)}$ is the set of the configuration entering in $A^{(j)}_\eps:=(Q\cap B(q_j,\eps))\times S^1$, i.e.
$\widetilde A_\eps^{(j)}$ is the set of $(q,\vec v)\in (Q\cap 
%\bigcup_{j=1}^J
\partial B(q_j,\eps])\times S^1$ s.t. $\langle \vec{qq_0},\vec v\rangle>0$.
Set also $A_\eps:=\bigcup_{j=1}^JA_\eps^{(j)}$.\\
Set $h'_\eps:=d\pi\eps/Area(Q)$ and $H_\eps(q',\vec v)=(j,\frac{\overrightarrow{q_jq'}}{r_j\eps},\vec v)$ if $q'\in\partial B(q_j,r_j\eps)$.
Here $M$ is the set of reflected unit vectors based on $\partial Q$, $\nu$ is the probability measure with density proportional to $(q,\vec v)\mapsto \langle \vec n(q),\vec v\rangle$, where $\vec n(q)$ is the unit vector normal to $\partial Q$ at $q$ directed towards $Q$ and $F:M\rightarrow M$ is the transformation mapping a configuration at a collision time to the configuration corresponding to the next collision time.\\
The normalizing function $G_\eps$ is given by
$G_\varepsilon(x)=H_\eps(Y_{\tau^{(Y)}_{\widetilde A_\varepsilon}(x)}(x))$ with $\tau^{(Y)}_{\widetilde A_\varepsilon}(y):=\inf\{t>0\ :\ Y_t(y)\in \widetilde A_\varepsilon\}$.\\
As in the setting of Theorem~\ref{THMflow}, we write $\Pi$ for the projection on $M$, that is $\Pi(q',\vec v)=(q,\vec v)$ is the post-collisional vector at the previous collision time. We take here $h_\eps:=\nu(\Pi(\widetilde A_\eps))$.\\
As for \cite[Theorem 4.4]{ps20}, we will apply \cite[Proposition 3.2]{ps20} after checking its assumptions. We define $\widetilde A_\varepsilon^{(j)}:=\{(q,\vec v)\in\partial B(q_j,\varepsilon)\times S^1\ :\ \langle \overrightarrow{qq_j},\vec v\rangle\ge 0\}$.
\begin{enumerate}
\item \underline{Measure of the set}. We have to adapt slightly the first item of the proof of \cite[Theorem 4.4]{ps20} which deals with the asymptotic behaviour of $\nu(B_\eps)$ with $B_\eps:=\Pi(\widetilde A_\eps)$. Observe that $B_\eps=\bigcup_{j=1}^JB_\eps^{(j)}$ with $B_\eps^{(j)}:=\Pi(\widetilde A_\eps^{(j)})$,  i.e. $B^{(j)}_\eps$ is the set of configurations 
$(q,\vec v)\in M$ such that the billiard trajectory $(Y_t(q))_{t\ge 0}$
will enter $B(q_j,\eps r_j)$ before touching $\partial Q$.
As seen in \cite[Lemma 5.1]{ps10},
\[
\mbox{if }q_j\in Q\setminus\partial Q,\quad \nu(B^{(j)}_\eps)=\frac{|Q\cap \partial B(q_j,r_j\eps) |}{|\partial Q|}=\frac{2\pi r_j\varepsilon}{|\partial Q|}.
\]
With exactly the same proof, we obtain that 
\[
\mbox{if }q_j\in\partial Q, \quad \nu(B^{(j)}_\eps)=\frac{|Q\cap \partial B(q_j,r_j\eps) |}{|\partial Q|}\sim\frac{\pi r_j\varepsilon}{|\partial Q|}\, .
\]
Moreover, for every distinct $j,j'$, 
$B^{(j)}_\eps\cap B^{(j')}_\eps$ is contained in $\Pi (B(x_{j,j'}, K_{j,j'}\eps)\cup B(x_{j',j},K_{j,j'}\eps))$
where $x_{j,j'}=\left(q_j,\overrightarrow{q_jq'_j}{q_jq'_j}\right)$ and $K_{j,j'}=\max\left(1,\frac{3}{q_jq_{j'}}\right)$.
So, due to \cite[Lemma 5.1]{ps10}, $\nu(B^{(j)}_\eps\cap B^{(j')}_\eps )=\mathcal O(\eps^2)=o(\eps)$. Hence we conclude that
\[\nu(B_\eps)\sim \sum_{j=1}^J\nu(B_\eps^{(j)})\sim\frac{d\pi \eps}{|\partial Q|}\, ,\]
as $\eps\rightarrow 0$.
% and that
%\[\lim_{\eps\rightarrow 0}\mu\left(\sum_{j=1}^J\mathcal N(F,\Pi(\widetilde A_\eps^{(j)},h_\eps,G_\eps)-  \mathcal N(F,\Pi(\bigcup_{j=1}^J\widetilde A^{(j)}_\eps\setminus\bigcup_{j'\ne j}\widetilde A_\eps^{(j')},h_\eps,G_\eps)([0,T]\times V)\right)=0\]
%and
%\[\lim_{\eps\rightarrow 0}\sum_{j=1}^J\mathcal N(F,\Pi(\widetilde A_\eps^{(j)},h_\eps,G_\eps)-   \mathcal N(F,\Pi(\bigcup_{j=1}^J\widetilde A^{(j)}_\eps\setminus\bigcup_{j'\ne j}\widetilde A_\eps^{(j')},h_\eps,G_\eps)([0,T]\times V)=0\]
\item Observe that
\[
\mathcal N(Y, A_\eps,h'_\eps,H_\eps)=\sum_{j=1}^J\mathcal N(Y, A^{(j)}_\eps,h'_\eps,H_\eps) \ge\mathcal N(Y, A'_\eps,h'_\eps,H_\eps) \, ,
\]
where $A'_\eps=\bigcup_{j=1}^J\Pi^{-1}(\Pi(\widetilde A^{(j)}_\eps))\setminus\bigcup_{j'\ne j}\Pi^{-1}(\Pi(\widetilde A^{(j')}_\eps))$
and that, for all $T>0$,
\begin{align*}
&\mathbb E_{\mu}\left[\left(\mathcal N(Y, A_\eps,h'_\eps,H_\eps) -\mathcal N(Y, A'_\eps,h'_\eps,G_\eps) \right)([0,T]\times V)\right]\\
&\le   \frac {T\max\tau}{2h_\eps (\min\tau)^2}\sum_{j,j'\, :\, j\ne j'}\nu\left(\widetilde A_\eps^{(j)}\cap\widetilde A_{\eps}^{(j')}\right)=o(1)\, ,
\end{align*}
where we used the representation of $Y$ as a special flow over $(M,\nu,F)$
due to the fact, proved in the previous item, that for any distinct labels $j,j'$, $\nu\left(\widetilde A_\eps^{(j)}\cap\widetilde A_{\eps}^{(j')}\right)=o(\eps)$.
Thus it is enough to prove the convergence in distribution of
$\mathcal N(Y, A'_\eps,h'_\eps,H_\eps)$ with respect to $\mu$.
\item The same argument ensures that, with respect to $\nu$, the convergence in distribution of
$\mathcal N(F,B_\eps,h_\eps,G_\eps)$ to $\mathcal P$ is equivalent to the convergence in distribution of $\mathcal N(F,B'_\eps,h_\eps,G_\eps)$, with $B'_\eps:=\Pi(A'_\eps)$.
\item 
Note that $\nu((\partial B_\varepsilon)^{[\varepsilon^\delta]})
=o(\nu(B_\varepsilon))$, for every $\delta>1$.
\item Due to Lemma~\ref{quickreturnbilliard}, for every $\sigma>1$, $  \nu(\tau_{B_\varepsilon}\le\eps^{-\sigma}|B_\varepsilon)=o(1)$, where $\tau_B$ is here the first time $k\ge 1$ at which $F^k(\cdot)\in B$.
\item Now let us prove that $(\nu(G_\eps^{-1}(\cdot)|B_\eps))_{\eps>0}$
converges to $\tilde m_0$ as $\eps\rightarrow 0$.\\
Let us consider the measure $\tilde\mu$ on $\{1,...,J\}\times S^1\times S^1$ with density $(j,p,\vec u)\mapsto r_j \langle(-p),\vec u\rangle^+$.\\
Observe first that $\tilde m_0=\tilde\mu(\cdot|A)$ with $A:=\bigcup_{j=1}^JA^{(j)}$ and
\[
A^{(j)}:=\left\{(p,\vec u)\in S^1\times S^1\, :\, \langle (-p),\vec u\rangle\ge 0,\ \langle p,\vec n_{q_j}\rangle\ge 0\right\}\, 
\]
and second that $\nu(G_\eps^{-1}(\cdot)|B_\eps)=\widetilde\mu(\cdot|G_\eps(B_\eps))$. But
\begin{align*}
\tilde\mu\left(A\setminus G_\eps(B_\eps)\right) &\le \sum_{j=1}^{J}\tilde\mu\left( H_\eps\left(Y_{\tau_{\widetilde A_\eps^{(j)}}^{(Y)}(\cdot)}\left(\bigcup_{j'\ne j}(B_\eps^{(j)} \cap B_\eps^{(j')})\right)\right)\right)\\
&\le \sum_{j=1}^J 2\max\tau|\partial Q|r_j\eps\nu\left(
\bigcup_{j\ne j'}(B_\eps^{(j)} \cap B_\eps^{(j')})\right)=o(\nu(B_\eps))
\end{align*}
and $G_\eps(B_\eps)\setminus A$ corresponds to points $(p,\vec u)\in S^1\times S^1$ with $q_j\in\partial Q$ with $0<\langle p,\vec u\rangle\le O(\eps)$, thus
\[
\tilde\mu\left(G_\eps(B_\eps)\setminus A\right) =O(\eps)\, .
\]
This ends the proof of the convergence in distribution of
the family of measures $(\nu(G_\varepsilon^{-1}(\cdot)|B_\varepsilon))_{\eps>0}$
to $\tilde m_0$ as $\eps\rightarrow 0$.
\item For the construction of $\mathcal W$ we use \cite[Proposition 3.4]{ps20}.
\end{enumerate}
Thus, due to  \cite[Proposition 3.2]{ps20}, we conclude the convergence of distribution with respect to $\nu$ of
%Note that $G_\eps(B_\eps)\subset \{1,...,J\}\times S^1\times S^1$ and that
$(\mathcal N(F,B_\eps,h_\varepsilon,G_\varepsilon))_{\varepsilon>0}$ and so, due to (ii), of
$(\mathcal N(F,B'_\eps,h_\varepsilon,G_\varepsilon))_{\varepsilon>0}$
to a PPP $\mathcal P$ with intensity $\lambda\times\tilde m_0$.
%Thus, due to~\cite[Theorem 1]{Zweimuller:2007}, $(\mathcal N(F,B'_\eps,h_\varepsilon,G_\varepsilon))_{\varepsilon>0}$ converges strongly in distribution to $\mathcal P$.
Applying now Theorem~\ref{THMflow}, we deduce the strong convergence in distribution of $(\mathcal N(F,A'_\eps,h_\varepsilon/\mathbb E_\nu[\tau],H_\varepsilon))_{\varepsilon>0}$
to $\mathcal P$ and so, due to (iii), the convergence in distribution with respect to $\mu$ of $(\mathcal N(F,A_\eps,h_\varepsilon/\mathbb E_\nu[\tau],H_\varepsilon))_{\varepsilon>0}$ to
$\mathcal P$. Now we conclude by \cite[Theorem 1]{Zweimuller:2007} and
by noticing that 
\[
\frac{h_\eps}{\mathbb E_\nu[\tau]}=\frac{d\pi \eps}{|\partial Q|\mathbb E_\nu[\tau]}=\frac{d\pi \eps}{Area(Q)}=h'_\eps\, .
\]
\end{proof}
\begin{lemma}\label{quickreturnbilliard}
\begin{equation}\label{quickreturn}
\forall \sigma\in(0,1),\quad \nu(\tau_{B_\eps}\le \eps^{-\sigma}|B_\eps)= o(1)
\end{equation}
\end{lemma}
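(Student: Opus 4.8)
The strategy is a union bound over the return time combined with a uniform bound on the correlation of $B_\eps$ with its iterates. Recall from the first step of the proof of Theorem~\ref{thm:billplusieursboules} that $\nu(B_\eps)\sim d\pi\eps/|\partial Q|$, so it is enough to show $\nu(B_\eps\cap\{\tau_{B_\eps}\le\eps^{-\sigma}\})=o(\eps)$. Since $\tau_{B_\eps}(x)=k$ forces $x\in B_\eps$ and $F^kx\in B_\eps$, and $B_\eps=\bigcup_{j=1}^JB_\eps^{(j)}$ with $J$ fixed, it suffices to prove, for each pair of indices $(j,j')$,
\[
\sum_{k=1}^{\lfloor\eps^{-\sigma}\rfloor}\nu\big(B_\eps^{(j)}\cap F^{-k}B_\eps^{(j')}\big)=o(\eps).
\]

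The plan is to prove the much stronger bound $\nu\big(B_\eps^{(j)}\cap F^{-k}B_\eps^{(j')}\big)=O(\eps^2)$, with a constant uniform in $k\ge1$; this immediately gives $\sum_{k\le\eps^{-\sigma}}\nu\big(B_\eps^{(j)}\cap F^{-k}B_\eps^{(j')}\big)=O(\eps^{2-\sigma})=o(\eps)$ because $\sigma<1$. Note that such a uniform bound cannot be improved to a summable-in-$k$ one, as it would contradict Poincar\'e recurrence; the truncation to $k\le\eps^{-\sigma}$ with $\sigma<1$ is exactly what makes the statement true.

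To obtain the uniform $O(\eps^2)$ bound I would use the hyperbolic geometry of the dispersing billiard. The set $B_\eps^{(j)}$ is an $O(\eps)$-neighbourhood in $M$ of the curve $\Gamma_j$ formed by the post-collisional configurations whose next free flight passes through $q_j$; as a wavefront focusing at $q_j$ it has negative curvature, uniformly bounded away from $0$ when $q_j$ is interior. For $k\ge1$ the curve $F^k\Gamma_j$ consists of wavefronts that have already passed the focal point $q_j$, hence is diverging (positive curvature), a property preserved forever by the dispersing dynamics; by the (linear, for finite horizon) complexity bound the singularity set of $F^k$ cuts it into at most $O(k)$ smooth pieces, of total length $O(\theta^{-k})$ for some $\theta\in(0,1)$, while by $\nu$-invariance the transverse width of $F^kB_\eps^{(j)}$ is correspondingly contracted to $O(\eps\theta^k)$. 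Since the positive curvature of $F^k\Gamma_j$ and the negative curvature of $\Gamma_{j'}$ keep these curves uniformly transverse, $F^k\Gamma_j$ crosses the bounded-length curve $\Gamma_{j'}$ at most $O(\theta^{-k})$ times, and each transverse crossing of the two neighbourhoods contributes $O(\eps\cdot\eps\theta^k)$ to the measure; the product is $O(\eps^2)$, uniformly in $k$ (one may equally phrase this as a bound for $\nu(F^kB_\eps^{(j)}\cap B_\eps^{(j')})$).

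The main obstacle is making this geometric picture uniform. One must control the angle between $\Gamma_{j'}$ and the pieces of $F^k\Gamma_j$ away from grazing collisions, where the curvature-to-slope dictionary degenerates (the grazing part carries only a negligible fraction of $\nu(B_\eps)$), and one must treat separately the case $q_j\in\partial Q$ and the first one or two values of $k$, where $F\Gamma_j$ is nearly the (vertical) fibre over $q_j$ rather than a genuine diverging-type curve — in that regime transversality to $\Gamma_{j'}$ still holds, but for a different reason, and a direct estimate is needed. One also invokes the finite-horizon complexity bound $O(k)$ on the number of smooth components of $F^k$ along a curve. Everything else — the union bound, the asymptotics $\nu(B_\eps)\sim d\pi\eps/|\partial Q|$, and the bookkeeping with the finitely many indices — is routine; these short-return estimates are in any case close in spirit to the recurrence-to-balls results for billiards in \cite{ps10}.
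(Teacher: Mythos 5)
Your approach is genuinely different from the paper's. You aim for a single, $k$-uniform geometric estimate $\nu\bigl(B_\eps^{(j)}\cap F^{-k}B_\eps^{(j')}\bigr)=O(\eps^2)$ and conclude by summation, whereas the paper splits the range $1\le n\le\eps^{-\sigma}$ into three regimes treated by distinct tools: for $n\lesssim|\log\eps|$ a direct hyperbolic-expansion estimate (Step~3), for $|\log\eps|\lesssim n\le\eps^{-s_a}$ the growth lemma together with homogeneity strips (Step~4), and for $\eps^{-s_a}\le n\le\eps^{-\sigma}$ a polynomial decay-of-correlations bound from \cite{ps15} applied to an enlargement $\widetilde B_\eps$ of $B_\eps$ by components of $M\setminus(\mathcal S_{-k(\eps)}\cup\mathcal S_{k(\eps)})$ (Step~5). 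Your geometric picture --- transversality of the decreasing (stable-like) curve $\Gamma_{j'}$ against the increasing (unstable-like) pieces of $F^k\Gamma_j$ --- is correct and is exactly what the paper's Step~2 records. The quantitative claim, however, is not established by your argument, and there are concrete gaps.

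First, the complexity count. You use ``linear complexity'' to say $\mathcal S_{-k}$ cuts $\Gamma_j$ into $O(k)$ smooth pieces. That is not what the finite-horizon complexity bound gives: it controls the number of singularity branches through a single point (the input to the growth lemma), while the number of connected components of $\Gamma_j\setminus\mathcal S_{-k}$ is in general exponential in $k$; the paper works with the count $C_2\lambda_2^k$ where $\lambda_2>\lambda_1^{1/2}$. With an exponential number of crossings and only a per-crossing contraction $\lambda_1^{-k/2}$ of the width, the product you compute does not obviously stay bounded, and with the paper's constants it does not. This is precisely why Step~3 is confined to $s\lesssim|\log\eps|$ and why Step~4 has to invoke the growth lemma rather than a crude piece count.

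Second, the grazing degeneracy. The relevant obstruction is not that the grazing portion of $B_\eps$ has small $\nu$-measure, as you suggest; it is that for cone curves running toward $\varphi=\pm\pi/2$ one has $\ell(\gamma)\asymp\sqrt{p(\gamma)}$, so a piece $\gamma$ with $F^s\gamma\subset B_\eps$ can have Euclidean length of order $\sqrt{\eps\lambda_1^{1-s}}$ rather than $\eps\lambda_1^{-s}$. The paper uses exactly this, $\ell(\gamma)\le C_0\sqrt{p(F\gamma)}$, and the resulting direct bound at $k=1$ is $O(\eps^{3/2})$, not $O(\eps^2)$. Your per-crossing estimate $\eps\cdot\eps\theta^k$ implicitly assumes bounded distortion, which fails through the grazing region; homogeneity strips are the standard remedy and are used in the paper's Step~4 but do not appear in your proposal. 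Finally, for $n\in[\eps^{-s_a},\eps^{-\sigma}]$ the transversality-and-counting picture amounts to an equidistribution claim for the (exponentially many, very short) pieces of $F^n\Gamma_j$, which is really a mixing statement; the paper handles it via \cite{ps15}. Your framing via a union bound and the Poincar\'e-recurrence sanity check are fine, but the substance of the estimate is missing.
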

\begin{proof}
This point corresponds to the the second item of the proof  of \cite[Theorem 4.4]{ps20}, which for Lebesgue-almost every point came from \cite[Lemma 6.4]{ps10}.
To prove \eqref{quickreturn}, we write
\begin{equation}\label{decompquickreturn}
\nu(\tau_{B_\eps}\le \eps^{-\sigma}|B_\eps)\le\sum_{k=1}^{\lfloor \eps^{-\sigma}\rfloor}\nu(F^{-n}(B_\eps)|B_\eps)\, .
\end{equation}
Thus our goal to bound $\nu(F^{-n}(B_\eps)|B_\eps)$.
\medskip

\noindent\underline{Step 1: Useful notations}.\\
%Before proving~\eqref{quickreturn}, let us introduce several useful notations.
We parametrize $M$ by $\bigcup_{i=1}^I
     \{i\}\times (\mathbb R/|\partial O_i|\mathbb Z)\times\left[-\frac \pi 2;\frac \pi 2\right]$.
A reflected vector $(q,\vec v)\in M$ is represented by $(i,r,\varphi)$ if $q\in\partial\Gamma_i$
as curvilinear absciss $r$ $\partial O_i$ and if $\varphi$
is the angular measure in $[-\pi/2,\pi/2]$ of $(\vec{n}(q),\vec{v})$ where
$\vec{n}(q)$ is the normal vector to $\partial Q$ at $q$.\\
For any $C^1$-curve $\gamma$ in $M$, we write $\ell(\gamma)$
for the euclidean length in the $(r,\varphi)$ coordinates of $\gamma$.
If moreover $\gamma$ is given in coordinates by $\varphi=\phi(r)$, then we also write $p(\gamma):=\int_\gamma \cos(\phi(r))\, dr$.
We define the time until the next reflection in the future by
$$\tau
%^+
(q,\vec {v}):=\min\{s>0\ :\ q+ s\vec{v}\in \partial Q\}\, .$$
%We also define $\tau^-:M\rightarrow (0,+\infty)$ for the time until the last reflection in the past (corresponding to $\tau^-=\tau^+\circ F^{-1}$ when $F^{-1}$ is well defined) by
%$$\tau^-(q,\vec {v}):=\min\{s>0\ :\ q+ s\vec{v}^-\in Q\}\, ,$$ with $\vec v^{-}$ be the reflected vector with respect to the normal to $\partial Q$ at $q$, i.e. $\vec v^-$ is the unit vector satisfying the following angular equality $\angle (\vec n(q),\vec v)=\angle(\vec v^-,\vec n(q))$.
It will be useful to define $\mathcal S_0:=\{\varphi=\pm\pi/2\}$.
%, $\mathcal C_+=\{(q,\vec v)\in M\ :\ q+\tau^+(q,\vec v)\vec v \in\{C_1,...,C_4\}\}$ and $\mathcal C_-=\{(q,\vec v)\in M\ :\ q+\tau^-(q,\vec v)\vec v^-\in\{C_1,...,C_4\}\}$. 
Recall that, for every $k\ge 1$, $F^k$ defines a $C^1$-diffeomorphism from $M\setminus \mathcal S_{-k}$ to $M\setminus \mathcal S_k$ with $\mathcal S_{-k}:=
  % F^{-k}R_0\cup
\bigcup_{m=0}^{k}F^{-m}(\mathcal S_0
%\cup \mathcal C_+
)$ and
$\mathcal S_{k}:=
  % F^{k}R_0\cup
\bigcup_{m=0}^{k}F^{m}(\mathcal S_0
%\cup \mathcal C_-
)$.
%We set $\pi_Q:M\rightarrow Q$ for the canonical projection.
\medskip

\noindent\underline{Step 2: Geometric study of $B_\eps$ and of $F(B_\eps)$}.\\
%Note that $\mu((\partial B_\eps)^{[\eps^\delta]})=o(\mu(B_\eps))$.

%Assume first that $q_0\not\in \partial Q$.
Moreover the boundary of each connected component of $B_\eps$ (resp. $F(B_\eps)$) is made with a bounded number of $C^1$ curves of the following forms:
\begin{itemize}
\item curves of $\mathcal S_0$,
%=\{(q,\vec v)\in\partial Q\times S^1\, :\, \vec v\perp \vec n(q)\}$. 
corresponding, in $(r,\varphi)$-coordinates, to $\{\varphi=\pm\frac\pi 2\}$.
\item $C^1$ curves of $F^{-1}(\mathcal S_0)$ (resp. $F(\mathcal S_0)$), which have the form $\varphi=\phi(r)$
with $\phi$ a $C^1$ decreasing (resp. increasing) function satisfying $\min\kappa\le |\phi'(r)|\le \max \kappa+\frac 1{\min\tau}$,
where $\kappa(q)$ is the curvature of $\partial Q$ at $q\in \partial Q$ and where
$\tau$ is the free flight length before the next collision time.
\item \underline{if $q_0\not\in \partial Q$}: $C^1$ curves, corresponding to
% the connected components of 
the set 
%$\mathcal Y_\eps$ 
of points $x=(q,\vec v)\in M$ (resp. $F(x)$) such that 
$[\Pi_Q(x),\Pi_Q(F(x))]$
is tangent to $\partial B(q_0,\eps)$. 
These curves have the form $\varphi=\phi_\varepsilon(r)$
with $\phi_\eps$ a decreasing (resp. increasing) function satisfying $\min\kappa\le |\phi_\eps'(r)|\le \max \kappa+\frac 1{d(q_0,\partial Q)-\eps}
\le \max \kappa+\frac 2{\tau_0}$),
with $\tau_0:=d(q_0,\partial Q)$
as soon as $\eps<\frac{\tau_0}2$.
\item  \underline{if $q_0\in \partial Q$}: $C^1$ curves, corresponding to
 %the connected components of 
the set 
%$\mathcal Y_\eps$ 
of points $x=(q,\vec v)\in M$ (resp. $F(x)$) such that 
%there exists $t\in(0,\tau(q,\vec v))$ such that $Y_t(q,\vec v)$ 
$[\Pi_Q(x),\Pi_Q(F(x))]$
is tangent to $\partial B(q_0,\eps)$ or 
%of points $x=(q,\vec v)\in M$ (resp. $F(x)$)
such that $\Pi_Q(F(x))$ is an extremity of $B(q_0,\eps)\cap Q$ and $[\Pi_Q(x),\Pi_Q(F(x))]$ contains no other point of $B(q_0,\eps)$.
% $[\Pi_Q(x),\Pi_Q(F(x))]\cap B(q_0,\eps]$ is a single point. 
These curves have the form $\varphi=\phi_\varepsilon(r)$
with $\phi_\eps$ a decreasing (resp. increasing) function satisfying $\min\kappa\le |\phi_\eps'(r)|
%\le \max \kappa+\frac 1{d(q_0,\partial Q)-\eps}
$.\\
The points $x=(q,\vec v)\in M$, with $d(q,q_0)\ll 1$
 quasi-immediately entering (resp. exiting) $B(q_0,\eps)\times S^1$
are contained in a union $R_\eps$ of  two rectangles of width 
$\mathcal O(\eps^{1/2})$ for the position (around $q_0$) and of width $\mathcal O(\eps)$ for the velocity direction (around the tangent vectors to $\partial Q$ at $q_0)$.\\
%  in $(r,\varphi)$-coordinates.\\
In $B_\eps\setminus R_\eps$ (resp. $F(B_\eps)\setminus (R_\eps\cup \Pi_Q^{-1}(B(q_0,\eps)))$) we also have $|\phi_\eps'(r)|\le \max \kappa+\frac 2{\tau_0}$
%(resp. $\min\kappa\le \phi_\eps'(r)\le \max \kappa+\frac 2{\tau_0}$),
with $\tau_0:=\min\tau$
% if $q_0\not\in \partial Q$ and $\tau_0:=\min\tau$otherwise, 
as soon as $\eps<\frac{\tau_0}2$.
%Observe that we also have morover $q\not\in\partial Q$, then the $\phi_\eps$ corresponding  to $F(B_\eps)$ satisfies
\end{itemize}
%Moreover $F(B_\varepsilon)$ satisfies the same properties with increasing curvesinstead of decreasing curves.\\
%Observe that $B_\eps=\bigcup_{u\in[0,\eps)}\mathcal Y_u$ ($\mathcal Y_0$ being the set of points $x=(q,v)$ such that there exists $t\in(0,\tau(q,\vec v))$ such that $Y_t(q,\vec v)=(q_0,\vec v)$).
%%$q_0\in[q,\pi_Q(Fx)]$.

%Let $\tau_0:=d(q_0,\partial Q)/10$.
We say that a {\bf curve $\gamma$ of $M$ satisfies
assumption (C)} if it is 
given by $\varphi=\phi(r)$ with $\phi$
$C^1$-smooth, increasing and such that $\min\kappa\le \phi'\le \max\kappa+\frac 2{\tau_0}$.
We recall the following facts.
\begin{itemize}
\item There exist $C_0,C_1>0$ and $\lambda_1>1$ such that, for every 
$\gamma$ satisfying Assumption (C) and every integer $m$ such that
$\gamma\cap\mathcal S_{-m}=\emptyset$, $F^m\gamma$ is a $C^1$-smooth
curve satisfying assumption (C) and
$C_1 p(F^m\gamma)\ge \lambda_1^mp(\gamma)$ and $\ell(\gamma)\le C_0
      \sqrt{p(F\gamma)}$.
\item There exist $C_2>0$ and $\lambda_2>\lambda_1^{1/2}$ such that, for every integer $m$,
the number of connected components of $M\setminus \mathcal S_{-m}$ is less than
$C_2\lambda_2^m$. Moreover $\mathcal S_{-m}$ is made of curves $\varphi=\phi(r)$
with $\phi$ $C^1$-smooth and strictly decreasing.
\item If $\gamma\subset M\setminus \mathcal S_{-1}$ is given by $\varphi=\phi(r)$ or $r=\mathfrak r(\varphi)$
with $\phi$ or $\mathfrak r$ increasing and $C^1$ smooth, then $F\gamma$
is $C^1$, is given by $\varphi=\phi_1(r)$ with
$\min\kappa\le \phi_1'\le\max\kappa+\frac 1{\min\tau}$.
Moreover $\int_{F\gamma}\, d\varphi\ge \int_{\gamma}\, d\varphi$.
%\item There exists $m_0$ such that, for every $x\in M\setminus\bigcup_{k=0}^{m_0-1}  F^{-k}\mathcal C_+$, there exists $k\in\{1,...,m_0\}$ such that $\tau^+(F^{k-1})>\tau_0$.
\end{itemize}
We observe that there exist $
%K_0,
K'_0>0$ and $\varepsilon_0>0$ such that, for every $\varepsilon\in(0,\varepsilon_0)$,
$F(B_\eps)\setminus R_\eps$ is made of a bounded number of connected components $V_\eps^{(i)}$
each of which is a strip of width at most $K'_0\eps$ of the following form in $(r,\varphi)$-coordinates:
\begin{itemize}
\item $\{(r,\varphi)\, :\, r\in J,\ \phi^{(i)}_1(r)\le\varphi\le\phi^{(i)}_2(r)\}$ (with $J$ an interval) and is delimited by two continuous piecewise $C^1$ curves $\gamma_j$ given by $\varphi=\phi_j(r)$ satisfying assumption (C) and $\Vert\phi^{(i)}_1-\phi^{(i)}_2\Vert_\infty\le K'_0\varepsilon$.
\item or possibly, if $q_0\in\partial Q$, $\{(r,\varphi)\, :\,   r^{(i)}_{1,\eps}\le r\le r^{(i)}_{2,\eps}\}$
with $|r^{(i)}_{1,\eps}-r^{(i)}_{2,\eps}|\le K'_0\eps$.
\end{itemize}
In particular, with the previous notations, any connected component $V_\eps^{(i)}$ of
$F(B_\eps)\setminus R_\eps$ has the form $\bigcup_{u\in[0,1]}\widetilde\gamma^{(i)}_u$,
where $\widetilde\gamma^{(i)}_u$ corresponds to the graph $\{\psi^{(i)}(u,r)=(r,u\phi^{(i)}_1(r)+(1-u)\phi^{(i)}_2(r))\, :\, r\in J_i\}$ (or possibly $\{\psi^{(i)}(u,\varphi)=(ur^{(i)}_{1,\eps}+(1-u)r^{(i)}_{2,\eps},\varphi),\ \varphi\in J_i \}$  if $q_0\in\partial Q$). Thus
\begin{align}
\nonumber\forall E\in\mathcal B(M),\quad\nu(&E\cap F(B_\eps\setminus R_\eps))\le \frac {Leb(E\cap F(B_\eps\setminus R_\eps))}{2|Q|}\\
\nonumber&\le\sum_i\frac 1{2|\partial Q|} \int_{J_i\times[0,1]} \mathbf 1_{\psi^{(i)}(u,s)
%(r,u\phi_1(r)+(1-u)\phi_2(r))
\in E}\left|\frac{\partial}{\partial u}\psi^{(i)}(u,s)\right|
%|\phi_2(r)-\phi_1(r)|
 \,  dsdu\\
&\le\frac{ K'_0\, \eps}{2|\partial Q|}\sup_{[0,1]}\ell(E\cap \widetilde\gamma_u)\, .\label{desint}
\end{align}
%Observe that if $q_0\in\partial Q$ the preceding holds true setting $\tau_0:=\min\tau$, keeping the set $T(B_\eps)$but replacing  $B_\eps$ by $T^{-1}(B_\eps)$ (and hence by replacing $T^{-n}(B_\eps)$ by $T^{-1-n}(B_\eps)$ in the sequel, noticing that in this case $T($).
\medskip

\noindent\underline{Step 3: Scarcity of very quick returns}.\\
%\begin{itemize}
%\item \underline{Very quick returns}: 
Let us prove the existence of $K_1>0$ such that, 
\begin{equation}\label{lemmaveryquick} 
\forall s\ge 1,\ \forall\eps<\frac{\tau_0}2,\quad \nu(F^{-s-1}(B_\varepsilon)|B_\varepsilon)\le K_1(\lambda_2/\lambda_1^{\frac 12})^s\varepsilon^{\frac 12}.
\end{equation}
Let $u\in(0,\varepsilon)$.
%\in\pi_Q(A_\varepsilon)$.
%Let $\gamma_1$ be a connected component of  $F(\mathcal Y_u)\setminus\mathcal S_s$. 
We define $\gamma$ be a connected component of $\widetilde\gamma_u\cap F(B_\varepsilon)\cap F^{-s}(B_\varepsilon
%\setminus R_\eps
)$.
%Let $m$ be the smallest positive integer such that $\min_\gamma\tau\circ F^{m-1}>\tau_0$. By definition of $A_\varepsilon$ and of $m_0$, $m<\min(m_0,s)$.
%Hence $F^m\gamma$ 
The curve $\gamma$ satisfies Assumption (C) or is vertical. 
In any case, any connected component of $F(\gamma)$ satisfies Assumption (C)
and $\ell(\gamma)\le C_0\sqrt{p(F(\gamma))}$ (indeed, if $\gamma$ is vertical, then $\ell(\gamma)\le \frac 1{\min\tau}p(F(\gamma))$. It follows
$$\ell(\gamma)
%\le \int_{\gamma}\, d\varphi  \le  \int_{F^{m}\gamma}\, d\varphi \le \ell(F^m\gamma)
\le C_0\sqrt{p%(F^m
(F(\gamma))}\le C_0\sqrt{C_1\lambda_1^{1-s}p(F^s\gamma)}\le C'_0\sqrt{C_1\lambda_1^{1-s}K'_0\varepsilon}$$
using first the fact that $F(\gamma)$ is an increasing curve contained in $M\setminus \mathcal S_{-s}$
% we also have$$ p(F^m\gamma)\le C_1\lambda_1^{m-s}p(F^s\gamma).$$
and second the fact that
 $F^s\gamma$ is 
is an increasing curve satisfying Condition (C) and contained in $B_\varepsilon
%\setminus R_\eps
$.
%we conclude that $p(F^s\gamma)\le \theta\varepsilon$.
%Hence
%$$\ell(\gamma)\le C_0\sqrt{C_1\lambda_1^{m-s}\theta\varepsilon} .$$
%By using the fact that
Since
$%\pi_Q^{-1}(\{q\})
F(\widetilde\gamma_u)\setminus\mathcal S_s$ contains at most $C_2\lambda_2^s$
connected components, using \eqref{desint}, we obtain
$$ \nu(F^{-s-1}(B_\varepsilon)\cap B_\varepsilon\setminus R_\eps)=\nu(F^{-s}(B_\varepsilon)\cap F(B_\varepsilon\setminus R_\eps))\le 
\frac{ K'_0\, \eps}{2|\partial Q|}\sup_{[0,1]}
C_2\lambda_2^s
C'_0\sqrt{
%\theta 
C_1}\lambda_1^{\frac s2}\varepsilon^{\frac 12}.$$
We conclude by using the fact that $\nu(B_\eps)=\frac{2\pi\varepsilon}{|\partial Q|}$ and that $\nu(R_\eps)=\mathcal O(\eps^{\frac 32})$.
% the fact that $\mu(A_\varepsilon)\ge (1-\sin K_0) \length(\pi_Q(A_\varepsilon))$ and by setting $K_1:=\frac{C_0\sqrt{\theta C_1}C_2   \lambda_1^{\frac {m_0}2}}{1-\sin K_0}$.
\medskip

\noindent\underline{Step 4: Scarcity of intermediate quick returns}.\\
%\item \underline{quick returns}. 
We prove now that
for any $a>0$, there exists $s_a>0$ such that
\begin{equation}\label{lemmaquick} 
\sum_{n=-a\log\eps}^{\eps^{-s_a}} \nu(B_\eps\cap F^{-n}B_\eps)=o(\nu(B_\eps)).
\end{equation}
Since $\nu(B_\eps)\approx\eps$ and $\nu(R_\eps)=\mathcal O(\eps^{\frac 32})$,
up to adding the condition $s_a<1/2$, it remains to prove \eqref{lemmaquick} with for 
$\nu(B_\eps\cap F^{-n}B_\eps)$ replaced by $\nu((B_\eps\setminus R_\eps)\cap F^{-n}(B_\eps
%\setminus R_\eps
))$.\\
If $q_0\in\partial Q$ and if $\widetilde\gamma_u$ is vertical, we replace it in the argument below 
by the connected components of $F(\widetilde\gamma_u)$ and will conclude
by noticing that; for any measurable set $A$, $\ell(\widetilde\gamma_u\cap F^{-1}(A))\le C''_0\ell(F(\widetilde\gamma_u\cap A))$.\\
We denote the $k$th homogeneity strip\footnote{see \cite{ChernovMarkarian} for notations and definitions.}  by $\mathbb{H}_k$ for $k\neq0$ and 
set $\mathbb{H}_0 = \cup_{|k|< k_0} \mathbb{H}_k$ for some fixed $k_0$.
Set $s:=\min(-a\log\theta,1)/3$. 
Let $k_\eps=\eps^{-s}$ and $H^\eps=\cup_{|k|\le k_\eps}\mathbb{H}_k$.
For any $u\in[0,1]$, we set 
%$W^\eps=W\cap \tilde V_\eps$, 
$\widetilde\gamma_{k,u}=\widetilde\gamma_u\cap \mathbb{H}_k$.
% and $W_k^\eps=W_k\cap \tilde V_\eps$. 
Each $\widetilde\gamma_{k,u}
%^\eps
$ is a weakly homogeneous unstable curve.

We cut each curve $\widetilde\gamma_{k,u}$ into small pieces $\widetilde\gamma_{k,u,i}$ such that 
each $F^j \widetilde\gamma_{k,u,i}$, $j=0,\ldots,n$ is contained in a homogeneity strip and a connected component of $M\setminus \mathcal{S}_1$. 
For $x\in \widetilde\gamma_{k,u,i}$ we denote by $r_n(x)$ the distance (in $F^n \widetilde\gamma_u$) of $F^n(x)$ to the boundary of $F^n \widetilde\gamma_{k,u,i}$. 
%We write $m_W$ for the length on $W$.

Recall that the 
growth lemma~\cite[Theorem 5.52]{ChernovMarkarian} ensures the existence of
$\theta\in(0,1)$, $c>0$ such that, for any weakly homogeneous unstable curve $\gamma$ one has
\begin{equation}\label{GL}
%\delta<\Lambda^nm_W(W)/2\ \ \Rightarrow\ \ 
\ell(\gamma\cap \{r_n<\delta\}) \le c\theta^n \delta + c\delta \ell(\gamma)\, .
\end{equation}
%where $m_W$ is the one dimensional Lebesgue measure on $W$, and  $r_n(x)$ denotes the distance (on $F^n\widetilde\gamma_u$) of $F^n(x)$ to the boundary of the homogeneous piece of $F^n\widetilde\gamma_u$ containing $x$.
Therefore, 
%for any connected component $\gamma$ of $\widetilde\gamma\setminus \mathcal S_{-1}$,
%By definition of $W^\eps$ 
\[
\begin{split}
\ell&(\widetilde\gamma_u
%\cap F( B_\eps) 
\cap F^{-n} ( B_\eps
%\setminus R_\eps
)\setminus \mathbb{H}_\eps)
%\le \ell(\widetilde\gamma_u\cap F^{-n}(B_\eps))
\\
\le 
&\sum_{|k|\le k_\eps} \ell(\cap\{r_n\ge \eps^{1-s}\}\cap F^{-n}(B_\eps
%\setminus R_\eps
)) + \ell(\widetilde\gamma_{u,k}\cap\{r_n<\eps^{1-s}\}).
\end{split}
\]
The first term inside the above sum is bounded by the sum $\sum_i \ell(\widetilde\gamma_{u,k,i}\cap F^{-n}(B_\eps
%\setminus R_\eps
)) $ over those $i$'s such that $F^n(\widetilde\gamma_{u,k,i})$ is of size larger than $\eps^{1-s}$. In particular $\ell(\widetilde\gamma_{u,k,i})\ge \eps^{1-s}$. On the other hand, by transversality 
\[
\ell(F^n (\widetilde\gamma_{u,k,i})\cap B_\eps
%\setminus R_\eps
)\le c\eps.
\]
By distortion (See Lemma 5.27 in \cite{ChernovMarkarian}) we obtain
\[
\ell(\widetilde\gamma_{u,k,i}\cap F^{-n}(B_\eps
%\setminus R_\eps
)) \le c \eps^{s} \ell(\widetilde\gamma_{u,k,i}).
\]
Summing up over these $i$ gives the first term inside the sum is bounded by
\[
\ell(\widetilde\gamma_{u,k}\cap\{r_n\ge \eps^{1-s}\}\cap F^{-n}(B_\eps)) 
\le c \eps^{s} \ell(\widetilde\gamma_{u,k,i}).
\]
%On the other hand, the growth lemma~\eqref{GL} implies that 
Thus
\[
\ell(\widetilde\gamma_{u,k}\cap\{r_n<\eps^{1-s}\}) \le c \theta^n \eps^{1-s} + c\eps^{1-s} \ell(\widetilde\gamma_{u,k}).
\]
A final summation over $k$ gives
\[
\ell(\widetilde\gamma_{u} \cap F^{-n} (B_\eps)\setminus \mathbb{H}_\eps)
\le c(\eps^{s}+\eps^{1-s})\ell(\widetilde\gamma_{u}) + ck_\eps\theta^n \eps^{1-s}.
\]
This combined with \eqref{desint} leads to
\[
\nu(F(B_\eps\setminus R_\eps)\cap F^{-n}(B_\eps))\le \nu(F(B_\eps\setminus R_\eps)\cap \mathbb H_\eps)+
\mathcal O(\eps^{1+s})=\mathcal O(\eps^s\nu(B_\eps)).
%\frac{ K'_0\, \eps}{2|\partial Q|}\left(c(\eps^{s}+\eps^{1-s})\ell(\widetilde\gamma_{u}) + ck_\eps\theta^n \eps^{1-s}\right)\, .
\]
where we use the fact that $B_\eps\setminus \mathbb{H}_\eps$ is contained in a uniformly bounded union of rectangles of horizontal width $\mathcal O(\eps)$ and contained in the $k_\eps^{-2}=\eps^{2s}$-neighbourhood of $\mathcal S_0$. We take $s_a<\min( s,\frac 12)$. 
\medskip

\noindent\underline{Step 5: End of the proof of \eqref{quickreturn}}.\\
% and end the proof of the result.
Choose $a=1/(4\log(\lambda_2/\lambda_1^{1/2})$.
Observe that, due to \eqref{lemmaveryquick}, we have
$$\sum_{s=1}^{-a\log\eps}\mu(F^{-s}A_\eps|A_\eps)\le
\frac{K_1}{\lambda_2/\lambda_1^{\frac 12}-1}(\lambda_2/\lambda_1^{\frac 12})^{-a\log\eps}\varepsilon^{1/2} \le 
\frac{K_1}{\lambda_2/\lambda_1^{\frac 12}-1}\varepsilon^{1/4} .$$
This combined with \eqref{lemmaquick} leads to
\begin{equation}\label{petitsretours}
\sum_{n=1}^{\varepsilon^{-s_a}}\nu(F^{-n}B_\eps|B_\eps)=o(1)\, .
\end{equation}
Let $\sigma>1$. In view of \eqref{decompquickreturn}, it remains to control
$\nu(F^{-n}B_\eps|B_\eps)$ for the intermediate integers $n$ such that $\varepsilon^{-s_a}\le n\le \varepsilon^{-\sigma}$. We approximate the set $B_\eps$ by the union $\widetilde B_\eps$ of connected components of $M\setminus (\mathcal S_{-k(\eps)}\cup\mathcal S_{k(\eps)})$ that intersects $B_\eps$, with $k(\eps)=\lfloor |\log \eps|^2\rfloor$. There exists $\widetilde C>0$ and $\widetilde\theta\in(0,1)$ such that,
for all positive integer $k$,
the diameter of each connected component of $M\setminus (\mathcal S_{-k}\cup\mathcal S_{k})$ is less than $\widetilde C \widetilde \theta^k$.\\
Thus $B_\eps\subset \widetilde B_\eps$ and $\nu(\widetilde B_\eps\setminus B_\eps)\le \nu\left((\partial B_\eps)^{[\widetilde C\widetilde\theta^{k(\eps)}]}\right)=\mathcal O(\eps \widetilde\theta^{k(\eps)})$. But, due to \cite[Lemma 4.1]{ps15}, we also have
\[
\forall m> 1,\ \forall n\ge 2k(\eps),\quad \nu\left(\widetilde B_\eps\cap F^{-n}\widetilde B_\eps\right)=\nu(\widetilde B_\eps)^2+\mathcal O(n^{-m}\nu(\widetilde B_\eps))\, .
\]
Since $k(\eps)=o(\eps^{-s_a})$ and thus
\[
\forall m>1,\quad
\sum_{n=\varepsilon^{-s_a}}^{\varepsilon^{-\sigma}}\nu(F^{-n}B_\eps|B_\eps)
\le \mathcal O\left( \eps^{1-\sigma}+\eps^{s_a(m-1)-\sigma}+\widetilde\theta^{k(\eps)}\right)
=o(1)\, ,
\]
as $\eps\rightarrow 0$,
since $\sigma<1$, $\widetilde\theta\in(0,1)$, $k(\eps)\rightarrow+\infty$ and by taking $m>1+\frac\sigma{s_a}$.
This combined with \eqref{petitsretours} and \eqref{decompquickreturn} ends the proof of  \eqref{quickreturn}.
\end{proof}

%
%
%
%
%
%

%%%%%%%%%%%%%%%

\end{appendix}

\end{document}